\numberwithin{equation}{section}
\newtheorem{theorem}{Theorem}[section]
\newtheorem{lemma}[theorem]{Lemma}
\newtheorem{proposition}[theorem]{Proposition}
\newtheorem{corollary}[theorem]{Corollary}
\theoremstyle{definition}
\newtheorem{definition}[theorem]{Definition}
\newtheorem{remark}[theorem]{Remark}
\newcommand{\fun}[3]{#1: {#2} \rightarrow {#3}}
\newcommand{\gen}[1]{\langle{#1}\rangle}
\begin{document}


\topmargin3mm
\hoffset=-1cm
\voffset=-1.5cm
\

\medskip
\begin{center}
{\large\bf The rainbow connection number of enhanced power graph
}
\vspace{6mm}\\
\footnotetext[2]{This work was partially supported by CONACYT.}

\end{center}

\medskip
\begin{center}
Luis A. Dupont, Daniel G. Mendoza and Miriam Rodr\'{\i}guez.
\\
{\small Facultad de Matem\'aticas, Universidad Veracruzana}\vspace{-1mm}\\
{\small Circuito Gonzalo Aguirre Beltr\'an S/N;}\vspace{-1mm}\\
{\small Zona Universitaria;}\vspace{-1mm}\\
{\small Xalapa, Ver., M\'exico, CP 91090.}\vspace{-1mm}\\
{\small e-mail: {\tt ldupont@uv.mx}\vspace{4mm}}
\end{center}

\medskip

\begin{abstract}
Let $G$ be a finite group, the enhanced power graph of $G$, denoted by $\Gamma_G^e$, is the graph with vertex set $G$ and two vertices $x,y$ are edge connected in $\Gamma_{G}^e$ if there exist $z\in G$ such that $x,y\in\langle z\rangle$. Let $\zeta$ be a edge-coloring of $\Gamma_G^e$. In this article, we calculate the rainbow connection number of the  enhanced power graph $\Gamma_G^e$.

\bigskip\noindent \textbf{Keywords:} enhanced power graph; power graph; rainbow path; rainbow connection number.\\
\bigskip\noindent \textbf{AMS Mathematics Subject Classification:} 05C25, 05C38, 05C45.
\end{abstract}

\section{Introduction}
Let $G$ be a finite group, the power graph of a finite group $G$ we denote the power graph by $\Gamma_G$, it is the graph whose vertex set are the elements of $G$ and two elements being adjacent if one is a power of the other. In \cite{Aalipour} the authors found that the power graph is contained in the non-commuting graph and, they asked about how much the graphs are closer, and then, they defined the \emph{enhanced power graph} of a finite group. We denoted to the enhanced power graph by $\Gamma_G^e$ whose vertex set is the group $G$ and two distinct vertices $x,y\in  V(\Gamma_G^e)$ are adjacent if $x,y\in\gen{z}$ for some $z\in G$. Later, the enhanced power graph of a group was studied by Sudip Bera and A. K. Bhuniya~\cite{Bera-Bhuniya}.\\

In 2006, Chartrand, Johns, McKean and Zhang \cite{Chartrand} introduced the concept of rainbow connection of graphs. This concept was motivated by communication of information between agencies of USA government after the September 11, 2001 terrorist attacks. The situation that helps to unravel this issue about communications has as graph-theoretic model the following. Let $\Gamma$ be a connected graph with vertex set $V(\Gamma)$ and edge set $E(\Gamma)$. We define a coloring $\fun{\zeta}{E(\Gamma)}{\{1,...,k\}}$ with $k\in\mathbb{N}$. A path $P$ is a \emph{rainbow} if any two edges of $P$ are colored distinct. If for each pair of vertices $u,v\in V(\Gamma)$, $\Gamma$ has a rainbow path from $u$ to $v$, then $\Gamma$ is  \emph{rainbow-connected} under the coloring $\zeta$, and $\zeta$ is called a \emph{rainbow k-coloring} of $\Gamma$. The \emph{rainbow connection number of $\Gamma$}, denoted by $rc(\Gamma)$ is the minimum $k$ for which there exists a rainbow $k$-coloring of $\Gamma$.\\

We will apply the idea of calculating the rainbow connected number of enhanced power graph through the graphs such that as was carried out by the authors from  \cite{Ma} about the power graph, with $InvMax_G$, the set of maximal involution of $G$, whose important theorems we can summarize in the following:

\begin{theorem}
Let $|InvMax_G|\neq\emptyset$ and $G$ be a finite group of order at least 3. Then
\begin{center}
  $rc(\Gamma_G)=
\begin{cases}
3, & \text{if } 1\leq |InvMax_G|\leq 2; \\
|InvMax_G|, & \text{if } |InvMax_G|\geq 3.
\end{cases}$
\end{center}
If $|InvMax_G|=\emptyset$, let $G$ be a finite group
\begin{enumerate}
  \item If $G$ is cyclic, then
$  rc(\Gamma_G) =
\begin{cases}
1, & \text{if }|G|\text{ is a prime power}; \\
2, & \text{otherwise}
\end{cases}$
  \item If $G$ es noncyclic, then $rc(\Gamma_G)= 2$ or $3$.
\end{enumerate}
\end{theorem}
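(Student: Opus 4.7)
The plan is to follow the case structure of the statement. The key observation driving both parts is that in the power graph $\Gamma_G$, the identity $1$ is adjacent to every other vertex, so $\Gamma_G$ has diameter at most $2$, and a maximal involution $x$ (an element of order $2$ whose cyclic subgroup $\gen{x}$ is a maximal cyclic subgroup of $G$) is adjacent only to $1$ in $\Gamma_G$. In particular, any rainbow path between two distinct maximal involutions must be of the form $x_i - 1 - x_j$.

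For $k:=|InvMax_G|\geq 3$, the lower bound $rc(\Gamma_G)\geq k$ follows at once: the $k$ edges $\{1,x_i\}$ must receive pairwise distinct colors since no alternative path exists between distinct maximal involutions. For the matching upper bound, the plan is to assign color $i$ to the edge $\{1,x_i\}$ and then extend to a $k$-coloring of $\Gamma_G$, verifying that every remaining non-adjacent pair $u,v$ admits a rainbow length-$2$ path through $1$; the richer cyclic-subgroup structure of the vertices outside $InvMax_G$ provides the flexibility needed. For $1\leq k\leq 2$, one produces an explicit $3$-coloring for the upper bound and a short case analysis for the lower bound $rc\geq 3$, obstructing any $2$-coloring by exhibiting a pair of vertices whose only length-$2$ rainbow connections would force a third color on the edges incident to $1$.

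For $InvMax_G=\emptyset$ with $G$ cyclic of prime-power order $p^n$, the cyclic subgroups of $G$ form a chain, so any two elements are comparable and one is a power of the other; hence $\Gamma_G$ is complete and $rc=1$. When $G$ is cyclic of non-prime-power order, elements of orders $p$ and $q$ (for distinct primes $p,q \mid |G|$) are not powers of each other, so $\Gamma_G$ is not complete ($rc\geq 2$), and a $2$-coloring based on an appropriate partition of the edges incident to $1$ (using the generator-based structure of $G$) achieves $rc=2$. When $G$ is non-cyclic without maximal involution, $rc\geq 2$ follows again from non-completeness, and $rc\leq 3$ is a consequence of diameter $2$ together with a standard three-coloring of the star at $1$.

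The main obstacle is the upper-bound construction in the small-$k$ regimes, namely $1\leq k\leq 2$ and the cyclic non-prime-power case, where one must design an explicit edge-coloring with very few colors that simultaneously rainbow-connects every pair. Coordinating the colors of the edges incident to $1$ with those among the remaining vertices, while respecting the lattice of cyclic subgroups of $G$, is the delicate step; the construction likely proceeds by a careful induction or partition argument exploiting the fact that every non-maximal-involution vertex lies in some cyclic subgroup that contains another vertex, yielding an alternative rainbow $2$-path.
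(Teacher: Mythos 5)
This theorem is not proved in the paper at all: it is the summary of the results of Ma--Feng--Wang \cite{Ma} quoted as background, so your attempt is in effect a blind reproof of that paper. Your structural observations are the correct starting point and agree with what \cite{Ma} uses: the identity is a universal vertex of $\Gamma_G$, a maximal involution $x$ has $1$ as its only neighbour, hence the unique path between two distinct maximal involutions is $x_i\,\text{--}\,1\,\text{--}\,x_j$ and the edges $\{1,x_i\}$ must get pairwise distinct colours, giving $rc(\Gamma_G)\geq |InvMax_G|$. Beyond that, however, what you offer is a plan rather than a proof, and two of the planned steps rest on reasoning that is actually false.

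Concretely: (i) ``$rc\leq 3$ is a consequence of diameter $2$ together with a standard three-coloring of the star at $1$'' is not a valid principle -- the star $K_{1,n}$ has diameter $2$ and $rc=n$; in the power-graph setting it is precisely the absence of maximal involutions (every nonidentity element then lies in a maximal cyclic subgroup of order at least $3$) that must be exploited. Moreover a colouring supported only on the edges at $1$ cannot work: for $G=\mathbb{Z}_p\times\mathbb{Z}_p$ elements of distinct maximal cyclic subgroups are pairwise non-adjacent, so separating all non-adjacent pairs by colours on the star at $1$ would need $p+1$ colours; one must use genuine length-$3$ rainbow paths $u\,\text{--}\,a\,\text{--}\,1\,\text{--}\,v$ with $a$ inside the maximal cyclic subgroup of $u$. (ii) The same defect affects your cyclic non-prime-power case: a $2$-colouring whose rainbow paths all pass through $1$ (or through one fixed generator) would force the non-adjacency graph to be bipartite, but if $|G|$ has three distinct prime divisors $p,q,r$ the elements of these orders are pairwise non-adjacent, a triangle, so ``an appropriate partition of the edges incident to $1$'' cannot be completed; different pairs must be routed through different intermediate vertices, and that construction is the actual content of the result. (iii) For $1\leq |InvMax_G|\leq 2$ the lower bound $rc\geq 3$ is not a routine case analysis: after the forced colouring of the edges $\{1,u\}$ (with $2$ colours, all rainbow paths have length at most $2$) you must exhibit, in \emph{every} such group, two vertices whose only common neighbour is $1$, or otherwise obstruct all rainbow $2$-paths; this needs a structural argument about maximal cyclic subgroups, not just the observation that a third colour ``would be forced''. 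Likewise the upper-bound colourings (the $k$-colouring for $k\geq 3$ and the $3$-colouring for $k\leq 2$) are only asserted, and your first attempt at them must address pairs $(x_i,u)$ where the colour $i$ collides with the colour chosen for $\{1,u\}$. So the skeleton matches the known proof, but the essential constructions and the small-case lower bound are missing, and two of the shortcuts you propose in their place do not survive scrutiny.
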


In this paper we compute the rainbow connection number of $\Gamma_G^e$ and we characterize it in terms of independence cyclic set, whose particular case is maximal involution. This paper is organized as follows. In section 2 we put definitions and some properties about rainbow connection number and we describe a way for guarantee a coloring for enchanced power graphs. In section 3 we wrote the main theorems for determine $\Gamma_G^e$.

\section{Definitions and properties}
We start the section with a proposition from enhanced power graph definition.
\begin{proposition}\label{prop1}
  $rc(\Gamma_G^e)=1$ if only if $\Gamma_G^e$ is complete if only if $G$ is cyclic.
\end{proposition}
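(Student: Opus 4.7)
The statement is a chain of two equivalences, which I would prove separately.

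For the first equivalence, $rc(\Gamma_G^e)=1$ if and only if $\Gamma_G^e$ is complete: a rainbow path using only one color can have at most one edge, so $rc(\Gamma_G^e)=1$ is equivalent to every pair of distinct vertices being joined by an edge, i.e.\ to the completeness of the graph. This is a general fact about rainbow colorings, independent of the group-theoretic context, and I would dispatch it in one line.

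For the second equivalence, $\Gamma_G^e$ is complete if and only if $G$ is cyclic, the direction ``$G$ cyclic $\Rightarrow\Gamma_G^e$ complete'' is immediate: if $G=\langle g\rangle$, then every pair $x,y\in G$ already lies inside $\langle g\rangle$, so $xy$ is an edge. For the converse, my plan is to pick an element $g\in G$ of maximal order $n$ and show $G=\langle g\rangle$. Given an arbitrary $x\in G$, completeness of $\Gamma_G^e$ furnishes some $z\in G$ with $g,x\in\langle z\rangle$; since $\langle z\rangle$ is cyclic and contains the order-$n$ element $g$, its order is a multiple of $n$, while $z$ itself has order $|\langle z\rangle|$, which by the maximality of $n$ cannot exceed $n$. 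Hence $|\langle z\rangle|=n$, so $\langle z\rangle=\langle g\rangle$, and therefore $x\in\langle g\rangle$.

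The only substantive ingredient is the maximal-order argument, and I do not expect it to be a serious obstacle: the rest is just an unpacking of the definitions of enhanced power graph and of rainbow connection. Note that I do not need to separately establish that $G$ is abelian; cyclicity follows directly from the maximal-order argument and in particular implies commutativity.
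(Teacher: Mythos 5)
Your proposal is correct. The paper states this proposition without any proof, treating it as an immediate consequence of the definition of the enhanced power graph (the equivalence ``$\Gamma_G^e$ complete $\Leftrightarrow$ $G$ cyclic'' is a known fact from the literature on enhanced power graphs), so your write-up in fact supplies the details the paper omits. Both halves of your argument are sound: the observation that a rainbow $1$-coloring forces every pair of vertices to be adjacent is the standard characterization $rc(\Gamma)=1 \Leftrightarrow \Gamma$ complete, and the maximal-order argument for the converse of the second equivalence is exactly the standard one --- if $g$ has maximal order $n$ and $g,x\in\langle z\rangle$, then $\langle g\rangle\subseteq\langle z\rangle$ forces $n\mid |\langle z\rangle|$ while maximality gives $|\langle z\rangle|\leq n$, so $\langle z\rangle=\langle g\rangle$ and $x\in\langle g\rangle$. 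No gaps.
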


\begin{definition}
Let $Max_G=\{x_1,...,x_m\}$ be an \emph{essential cyclic set} if
\begin{enumerate}
  \item for all $g\in G$, $\gen{g}=\gen{x_i}$ for some $i$,
  \item $\gen{x_i}\neq\gen{x_j}$ for $i\neq j$,
  \item each $x_i$ is a maximal cyclic subgroup.
\end{enumerate}
\end{definition}

Therefore \ref{prop1} can be rewritten as follows
\begin{proposition}\label{prop23}
$|Max_G|=1$ if only if  $G$ is a cyclic group if only if $rc(\Gamma_G^e)=1$
\end{proposition}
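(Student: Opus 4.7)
The plan is to peel the proposition into two independent equivalences and dispatch each one separately. The equivalence $G \text{ cyclic}\iff rc(\Gamma_G^e)=1$ is precisely Proposition~\ref{prop1}, so I need only establish $|Max_G|=1\iff G \text{ cyclic}$; this is a purely group-theoretic statement obtained by unwinding the three axioms of an essential cyclic set.

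For the implication $|Max_G|=1\Rightarrow G$ cyclic, I would write $Max_G=\{x_1\}$ and invoke condition (1) of the essential cyclic set definition: since only one index is available, every $g\in G$ must satisfy $\langle g\rangle=\langle x_1\rangle$, so in particular $g\in\langle x_1\rangle$. Hence $G\subseteq\langle x_1\rangle\subseteq G$, forcing $G=\langle x_1\rangle$, which is visibly cyclic.

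For the converse, I would take a generator $g$ of $G$, set $x_1:=g$, and check that the singleton $\{x_1\}$ satisfies all three defining axioms. Condition (3) holds because $\langle g\rangle=G$ is maximal among cyclic subgroups of itself; condition (2) is vacuous for a singleton; and condition (1) follows because every $h\in G$ lies in $G=\langle x_1\rangle$. Uniqueness of $Max_G$ is built into its defining conditions (the $\langle x_i\rangle$ are distinguished maximal cyclic subgroups), so this witness is the only essential cyclic set, giving $|Max_G|=1$.

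The only potential friction is verifying that $Max_G$ is well-defined and canonical enough that ``$|Max_G|=1$'' has the meaning we want, but this is immediate once the three axioms are spelled out. No deeper obstacle arises: the proposition is, in effect, Proposition~\ref{prop1} translated into the vocabulary of essential cyclic sets, with the translation carried out by inspecting condition (1) in the case of a single representative.
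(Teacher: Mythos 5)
Your proposal is correct and takes essentially the same route as the paper, which gives no separate argument at all: it simply presents the proposition as Proposition~\ref{prop1} rewritten in the vocabulary of essential cyclic sets, exactly the translation you carry out explicitly. Your verification of $|Max_G|=1\iff G$ cyclic (reading condition (1) as saying every element lies in some $\langle x_i\rangle$, which is clearly the intended meaning, since the literal wording would be inconsistent with the identity element) is the only content needed, and it is sound.
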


\begin{proposition}
  If $|Max_G|=2$, then $rc(\Gamma_G^e)=2$
\end{proposition}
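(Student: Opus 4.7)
The plan is to establish the lower bound $rc(\Gamma_G^e) \geq 2$ directly from Proposition \ref{prop23}, and then to construct an explicit rainbow $2$-coloring to get the matching upper bound.

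For the lower bound, note that $|Max_G| = 2$ forces $G$ to be non-cyclic by Proposition \ref{prop23}, and the same proposition then gives $rc(\Gamma_G^e) \neq 1$, so $rc(\Gamma_G^e) \geq 2$.

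For the upper bound, write $Max_G = \{x_1, x_2\}$ and set $H_i = \gen{x_i}$. The key structural fact I would use is that every element of $G$ lies in $H_1 \cup H_2$ (by condition (1) of the definition of essential cyclic set), and consequently every edge $uv$ of $\Gamma_G^e$ has both endpoints in a common $H_i$: indeed, if $u,v \in \gen{z}$ for some $z$, then $\gen{z}$ is contained in some maximal cyclic subgroup, which must be $H_1$ or $H_2$. I would then define the coloring $\zeta: E(\Gamma_G^e) \to \{1,2\}$ by
\[
\zeta(uv) = \begin{cases} 1 & \text{if } \{u,v\} \subseteq H_1, \\ 2 & \text{if } \{u,v\} \subseteq H_2 \text{ and } \{u,v\} \not\subseteq H_1. \end{cases}
\]
(Edges inside $H_1 \cap H_2$ are colored $1$ by default; the choice is immaterial.)

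The verification of rainbow-connectedness is the step to check carefully, but the identity element $e \in H_1 \cap H_2$ is a universal bridge. For any $u,v \in G$, there are three cases: (a) if $u,v \in H_1$ or $u,v \in H_2$, the edge $uv$ itself is a rainbow path of length $1$; (b) the remaining case is $u \in H_1 \setminus H_2$ and $v \in H_2 \setminus H_1$ (which precludes a direct edge by the structural fact above), and here the two-edge path $u - e - v$ uses the edge $ue$, which must receive color $1$ because $u \notin H_2$, and the edge $ev$, which must receive color $2$ because $v \notin H_1$, giving a rainbow path. The main (mild) obstacle is simply confirming that cross-pairs $u \in H_1 \setminus H_2$, $v \in H_2 \setminus H_1$ are not adjacent, so that a length-two path is actually required and is the correct one to construct; once this is observed, the bound $rc(\Gamma_G^e) \leq 2$ follows and combining with the lower bound yields equality.
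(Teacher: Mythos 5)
Your proof is correct and follows essentially the same route as the paper: a lower bound from non-completeness (equivalently Proposition~\ref{prop23}) plus the $2$-coloring that assigns color $i$ to edges inside $\gen{x_i}$ and routes every cross pair through the identity. If anything, your version is slightly more careful than the paper's, since you explicitly resolve the coloring of edges inside $\gen{x_1}\cap\gen{x_2}$ and justify that no edge joins $\gen{x_1}\setminus\gen{x_2}$ to $\gen{x_2}\setminus\gen{x_1}$.
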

\begin{proof}
  Since $\Gamma_G^e$ is not complete, we have $rc(\Gamma_G^e)\geq 2$, then we have
  \begin{center}
  $\begin{array}{lll}
  E_1&=&\big\{\{a,b\}|a,b\in\gen{x_1}\big\}\\
  E_2&=&\big\{\{a,b\}|a,b\in\gen{x_2}\big\}
  \end{array}$
  \end{center}
  We can note that the only one path  between $x_{1_j}$ and  $x_{2_i}$ for all $x_{1_j}\in\gen{x_1}$ and $x_{2_i}\in\gen{x_2}$ is $(x_{1_j},e,x_{2_i})$, then the 2-coloring is given by $\zeta: E(G)\longrightarrow \{1,2\}$ with $f\mapsto i$, if $f\in E_i$ is a rainbow 2-coloring of $\Gamma_G^e$.
\end{proof}

\begin{definition}
  We define the \emph{independence cyclic set of $Max_G$}, denoted by $ics(G)$, as
  $$ics(G)=\{x_i\in Max_G|\gen{x_i}\cap\gen{x_j}=e  \text{ for } i\neq j\}$$
  The  independence cyclic number of $Max_G$, denoted by $icn(G)$, is $icn(G)=|ics(G)|$.
\end{definition}

\begin{remark}\label{contencion}
We note that $$InMax_G\subseteq ics(G)\subseteq Max_G.$$
\end{remark}

\begin{proposition}\label{star}
  If $|Max_G|=3$, then $$rc(\Gamma_G^e)=\begin{cases}
                                   2, & \mbox{if } icn(G)=1 \\
                                   3, & \mbox{if } icn(G)=3
                                 \end{cases}$$
\end{proposition}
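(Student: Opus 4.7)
The plan is to split on $icn(G)\in\{1,3\}$, using common notation $A=\gen{x_1}$, $B=\gen{x_2}$, $C=\gen{x_3}$ and writing $E_A,E_B,E_C$ for the cliques these subgroups induce in $\Gamma_G^e$. Since $|Max_G|=3$, Proposition~\ref{prop23} yields the uniform lower bound $rc(\Gamma_G^e)\ge 2$, and since $A,B,C$ are the only maximal cyclic subgroups of $G$, every edge of $\Gamma_G^e$ belongs to some $E_i$.

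For $icn(G)=1$, I would assume without loss of generality that $x_1\in ics(G)$, so $A\cap B=A\cap C=\{e\}$ and $H:=B\cap C\ne\{e\}$. The proposed $2$-coloring $\zeta$ is: color $1$ on every edge of $E_A$; color $2$ on every edge of $E_B\cup E_C$ incident to $e$; color $1$ on edges of $E_B\setminus E_C$ not incident to $e$; color $2$ on edges of $E_C\setminus E_B$ not incident to $e$; either color on the remaining edges of $E_B\cap E_C$. Rainbow-connectivity reduces to two nontrivial pair types. For $u\in A\setminus\{e\}$ and $v\in(B\cup C)\setminus\{e\}$, the only length-two path is $u\to e\to v$, picking up colors $1$ and $2$. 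For $u\in B\setminus H$ and $v\in C\setminus H$, the route $u\to e\to v$ repeats color $2$, so one uses the hypothesis $H\ne\{e\}$ to choose any $w\in H\setminus\{e\}$; then $\{u,w\}\in E_B\setminus E_C$ has color $1$ and $\{w,v\}\in E_C\setminus E_B$ has color $2$, giving a rainbow path.

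For $icn(G)=3$, the edge sets $E_A,E_B,E_C$ are pairwise disjoint. The upper bound $rc(\Gamma_G^e)\le 3$ is obtained by coloring $E_i$ with color $i$, so that every cross-subgroup pair is joined by the length-two rainbow path through $e$. For the lower bound, each $g\in G\setminus\{e\}$ lies in a unique $\gen{x_i}$, so for $i\ne j$ and $u_i\in\gen{x_i}\setminus\{e\}$, $u_j\in\gen{x_j}\setminus\{e\}$ the vertices are non-adjacent and every $u_iu_j$-path must visit $e$. With only two colors a rainbow path has length at most two, so the unique candidate is $u_i\to e\to u_j$; picking one $u_i$ in each $\gen{x_i}\setminus\{e\}$ would force the three colors $\zeta(\{u_1,e\}), \zeta(\{u_2,e\}), \zeta(\{u_3,e\})$ to be pairwise distinct, which is impossible, so $rc(\Gamma_G^e)\ge 3$.

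The delicate step is the $icn(G)=1$ case: the naive attempt to route everything through $e$ forces three different colors at $e$, so the whole construction hinges on using the nontrivial intersection $H$ as an alternate transit set between $B\setminus H$ and $C\setminus H$. Once this bypass is built into the coloring, verification is direct. The $icn(G)=3$ lower bound is a clean pigeonhole once one observes that $e$ is the unique cut vertex separating the three punctured subgroups.
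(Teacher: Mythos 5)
Your proof is correct and follows essentially the same route as the paper: for $icn(G)=1$ you build a $2$-coloring that sends $\gen{x_2}$--$\gen{x_3}$ pairs through a nontrivial element of $\gen{x_2}\cap\gen{x_3}$ rather than through $e$, and for $icn(G)=3$ you color by maximal cyclic subgroup and rule out two colors by a pigeonhole at the cut vertex $e$, which is exactly the paper's strategy. Your version is in fact a bit tidier, since your single subgroup-coloring handles the $icn(G)=3$ upper bound uniformly where the paper splits into the $|InMax_G|=0$ and $InMax_G=Max_G$ subcases, and your lower-bound argument makes precise the paper's informal claim that no $2$-coloring exists.
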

\begin{proof}
  Let $Max_G=\{x_1,x_2,x_3\}$ be an essential cyclic set.

\begin{remark}\label{edge}
 We do not need to be concise with the path with both vertex in $x_i$ for some $i$, because with one color, we can coloring this path. The difficult is when both vertex are in different $x_i$.
\end{remark}

\textbf{Case $\boldsymbol{cin(G)=1}$} Without loss of generality we suppose $\gen{x_1}\cap\gen{x_2}=e=\gen{x_1}\cap\gen{x_3}$ and $\gen{x_2}\cap\gen{x_3}\neq e$. Since $G$ is not cyclic group, then $rc(\Gamma_G^e)\geq 2$. Let $h\in\gen{x_2}\cap\gen{x_3}$ with $h\neq e$, thus
      \begin{center}
$\begin{array}{lll}
  E_1&=&\big\{\{a,b\}|\{a,b\}\subset\gen{x_1}\big\}\bigcup\big\{\{a,b\}|\{a,b\}\subset\gen{x_2}\text{ with }a,b\neq e\big\}\\
  E_2&=&\big\{\{e,g\}|g\in\gen{x_2}\cup\gen{x_3}\big\}\bigcup\big\{\{a,b\}|a\in\gen{x_3}\setminus\gen{x_2},\quad b\in\gen{x_2}\cap\gen{x_3},\,b\neq e\big\}
\end{array}$
  \end{center}
  In particular $\{h,g\}\in E_2$ for all $g\in\gen{x_3}\setminus\gen{x_2}.$ Then, we will give a 2-coloring to $\Gamma_G^e$:
  \begin{equation}\label{rc2}
    \begin{array}{ccc}
      \zeta:E(G) & \longrightarrow & \{1,2\} \\
       f& \mapsto & i
    \end{array} \text{ if } i\in E_i
  \end{equation}
\begin{figure}[!htb]
\centering
\includegraphics[scale=.2]{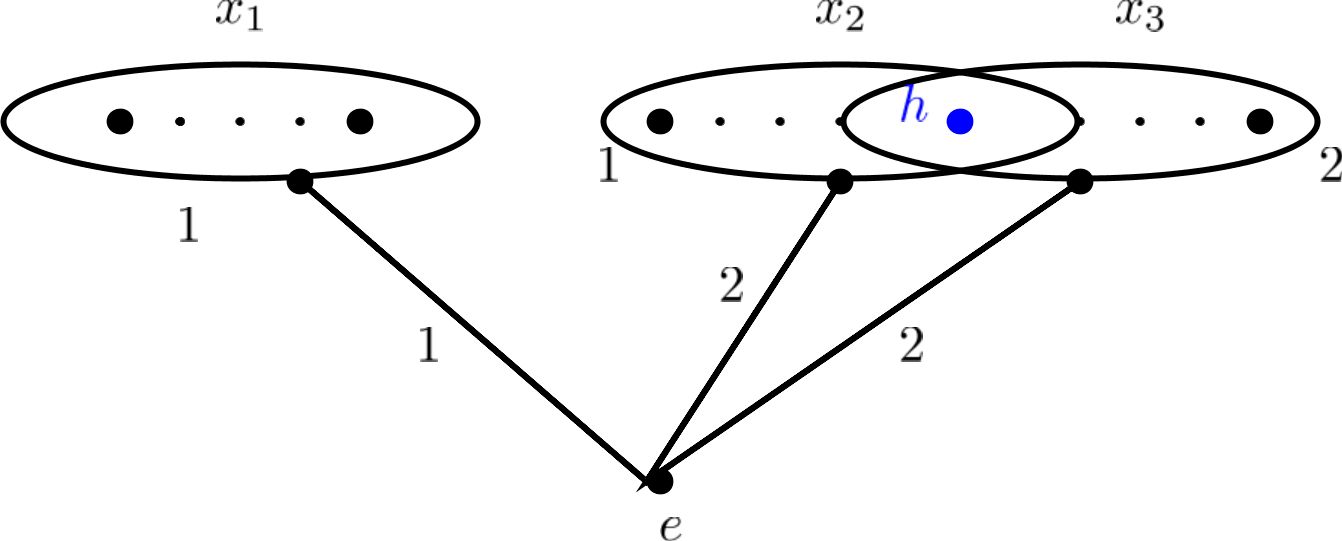}
\end{figure}

\textbf{Case $\boldsymbol{cin(G)=3}$} We suppose that $|InMax_G|=0$, and without loss of generality $\gen{x_i}\cap\gen{x_j}=e$ for $1\leq i<j\leq 3$. We will give a 3-coloring for $\Gamma_G^e$, with
$$\begin{array}{lll}
    E_1 & = & \big\{\{x_i,e\}|i=1,2,3   \big\} \\
    E_2& = & \big\{\{e,x_{i_j}\}|x_{i_j}\in\bigcup_{i=1}^{3}\gen{x_i}\setminus x_i  \big\} \\
    E_3 & = & \big\{\{a,b\}|a,b\in\gen{x_i}\text{ for }i=1,2,3  \big\}
  \end{array}$$

 With the coloring
   \begin{equation}\label{rc3}
    \begin{array}{ccc}
      \zeta:E(G) & \longrightarrow & \{1,2,3\} \\
       f& \mapsto & i
    \end{array} \text{if } i\in E_i
  \end{equation}
\begin{figure}[!htb]
\centering
\includegraphics[scale=.2]{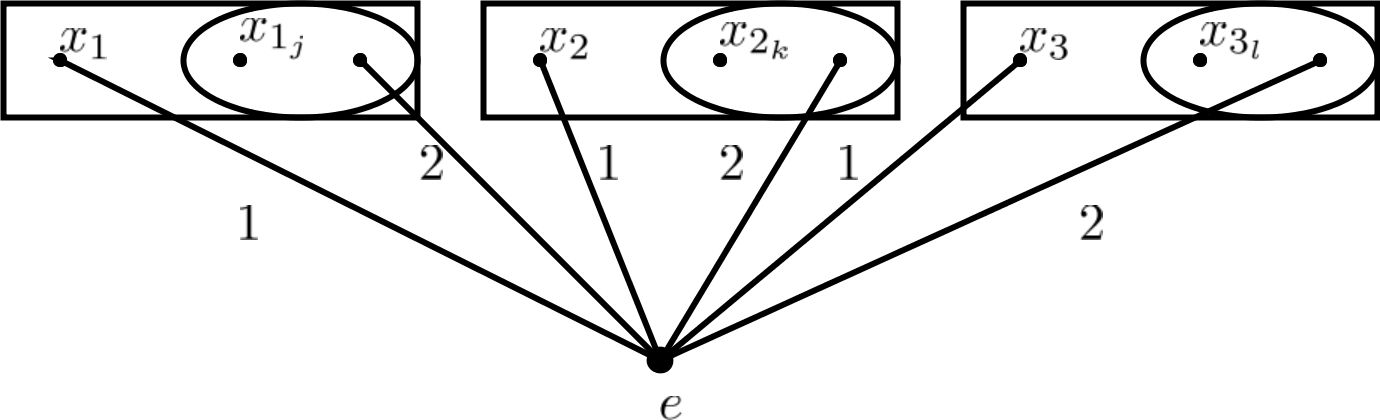}
\end{figure}

Now, we suppose that $InMax_G=Max_G$, then with $E_i=\big\{\{a,b\}|a,b\in\gen{x_i}\big\}$ be the edges set, and the coloring is given like \ref{rc3}.
\begin{figure}[!htb]
\centering
\includegraphics[scale=.2]{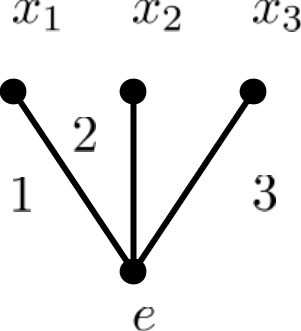}
\end{figure}

We can not give a 2-coloring for $\Gamma_G^e$. We claim that there is a 2-coloring. Let $u\in\gen{x_1}$, $v\in\gen{x_2}$ and $w\in\gen{x_3}$. Then, we have $\zeta(u,e)=1$ and $\zeta(e,v)=2$, thus $(u,e,v)$ is a desire rainbow path. Likewise $\zeta(u,e)=1$ and $\zeta(e,w)=2$, but for $(v,e,w)$ there is not a rainbow path.
\end{proof}

From \ref{star} we can ask ourself about what happens whether no one of $\gen{x_i}$ can be intersected by another $\gen{x_j}$ with $i\neq j$ or, what happens if all $\gen{x_i}$ are intersected with some common elements. For this, we have the following prepositions.\\

The following preposition is just like \cite[Proposition 2.4]{Ma}

\begin{proposition}\label{prop27}
  Let $Max_G=\{x_1,...,x_m\}$ be an essential cyclic set and $InMax_G=Max_G$. Then $rc(\Gamma_G^e)=m$.
\end{proposition}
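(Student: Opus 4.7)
The plan is to show that, under the hypotheses of the proposition, $\Gamma_G^e$ collapses to the star $K_{1,m}$ with center $e$, after which $rc(\Gamma_G^e)=m$ follows from the standard two-way argument for stars.

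First I would unpack the hypotheses. Since $InMax_G=Max_G$, every generator $x_i$ is an involution, so each maximal cyclic subgroup has the form $\langle x_i\rangle=\{e,x_i\}$. Combined with condition (1) of the essential cyclic set (every $g\in G$ satisfies $\langle g\rangle=\langle x_i\rangle$ for some $i$) and condition (2) ($\langle x_i\rangle\ne\langle x_j\rangle$ for $i\ne j$), this forces $G=\{e,x_1,\dots,x_m\}$, hence $|G|=m+1$.

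Next I would identify $\Gamma_G^e$ with the star $K_{1,m}$. By definition, two distinct vertices $u,v$ are adjacent iff $u,v\in\langle z\rangle$ for some $z\in G$, and such a cyclic subgroup must equal some $\langle x_k\rangle=\{e,x_k\}$. For $i\ne j$ the pair $x_i,x_j$ cannot both lie in a single $\{e,x_k\}$, so $x_i\not\sim x_j$; on the other hand, each $x_i$ is adjacent to $e$ via $\langle x_i\rangle$. Hence $\Gamma_G^e\cong K_{1,m}$ with center $e$ and leaves $x_1,\dots,x_m$.

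Now I would establish the two inequalities. For the upper bound, define $\zeta(\{e,x_i\})=i$ for $i=1,\dots,m$; any two leaves $x_i,x_j$ with $i\ne j$ have unique connecting path $(x_i,e,x_j)$, whose two edges receive distinct colors, and each edge $\{e,x_i\}$ is itself a trivial rainbow $e$--$x_i$ path. So $rc(\Gamma_G^e)\le m$. For the lower bound, fix any rainbow coloring $\zeta$ of $\Gamma_G^e$; for each pair $i\ne j$ the only $x_i$--$x_j$ path is $(x_i,e,x_j)$, so it must be rainbow, forcing $\zeta(\{e,x_i\})\ne\zeta(\{e,x_j\})$. Since this holds for all pairs, the $m$ edges of $\Gamma_G^e$ receive pairwise distinct colors, so $rc(\Gamma_G^e)\ge m$.

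There is no real obstacle here beyond recognising the structure; the only non-trivial step is pinning down that $G$ has exactly $m+1$ elements so that $\Gamma_G^e$ is literally a star rather than merely containing one. Once that reduction is made, the rainbow number is forced because stars have unique leaf-to-leaf paths through the center.
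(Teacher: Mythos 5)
Your proof is correct and follows essentially the same route as the paper: identify that $InMax_G=Max_G$ forces each $\langle x_i\rangle=\{e,x_i\}$ so that $\Gamma_G^e$ is the star with center $e$, color the edge(s) inside $\langle x_i\rangle$ with color $i$, and use the uniqueness of the path $(x_i,e,x_j)$ between distinct leaves. You additionally spell out the lower bound $rc(\Gamma_G^e)\geq m$, which the paper leaves implicit in its remark that $(u,e,v)$ is the only path (and later covers via the lemma borrowed from Ma's Lemma 2.2), so your write-up is if anything more complete.
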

\begin{proof}
For $\Gamma_G^e$ we will give a m-coloring. For each $i=1,...,m$ we have
$$E_i(G)=\big\{\{a,b\}|a,b\in\gen{x_i}\big\}$$
since for $u\in\gen{x_i}$ and $v\in\gen{x_j}$ with $i\neq j$ we have a only one path between them, which is $(u,e,v)$, and the coloring is given by
$$\begin{array}{rcl}
  \zeta: E(G) & \longrightarrow & \{1,...,m\} \\
  f & \mapsto & i\quad \text{if } f\in E_i
\end{array}$$
We can see the diagram in figure \ref{fig27}.
\begin{figure}[!htb]
\centering
\includegraphics[scale=.2]{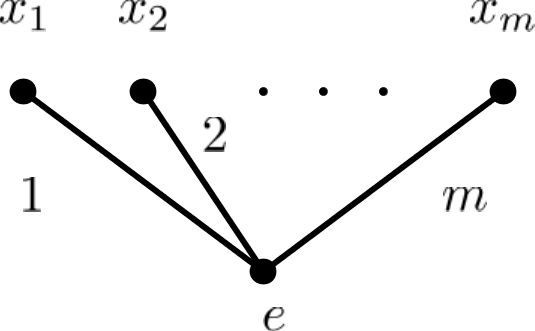}
\caption{$InMax_G=Max_G$}
\label{fig27}
\end{figure}
\end{proof}

\begin{proposition}\label{prop28}
Let $Max_G=\{x_1,...,x_m\}$ be an essential cyclic set with $m\geq 2$, and $h_{i,j}\in\gen{x_i}\cap\gen{x_j}$ for $1\leq i<j\leq m$. If $h_{i,j}\neq h_{r,s}$, with $i\neq r$ or $j\neq s$, then $rc(\Gamma_G^e)=2$.
\end{proposition}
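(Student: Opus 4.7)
The plan is to establish both bounds separately. The lower bound $rc(\Gamma_G^e)\geq 2$ is immediate from Proposition~\ref{prop23}: since $m\geq 2$, the group $G$ is non-cyclic, so $\Gamma_G^e$ is not complete.

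For the upper bound I would generalize the coloring used in Proposition~\ref{star}'s case $icn(G)=1$. Single out $\gen{x_1}$ and define $\zeta:E(\Gamma_G^e)\to\{1,2\}$ by assigning $\zeta(f)=2$ whenever $f=\{e,g\}$ with $g\notin\gen{x_1}$, or $f=\{h_{j,k},v\}$ with $2\leq j<k\leq m$ and $v\in\gen{x_k}\setminus\gen{x_j}$, $v\neq e$; every other edge receives color $1$. The distinctness hypothesis is used crucially here: since $h_{i,j}\neq h_{r,s}$ whenever $(i,j)\neq (r,s)$, each bridge is uniquely associated to a single pair under the second rule, so the two color-$2$ prescriptions cannot disagree on any edge.

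Rainbow connectedness would then be verified by a case analysis on a non-adjacent pair $(u,v)$. If both vertices lie in a common $\gen{x_i}$, they are already adjacent. If $u\in\gen{x_1}$ and $v\notin\gen{x_1}$, the length-$2$ path $u\to e\to v$ is rainbow: $\{u,e\}\subseteq\gen{x_1}$ defaults to color $1$, while $\{e,v\}$ matches the first color-$2$ rule. If $u\in\gen{x_j}\setminus\gen{x_1}$ and $v\in\gen{x_k}\setminus\gen{x_1}$ with $j\neq k$ (WLOG $j<k$), the path $u\to h_{j,k}\to v$ is rainbow: $\{u,h_{j,k}\}$ receives color $1$, since no color-$2$ rule applies ($u\in\gen{x_j}$ is not in $\gen{x_k}\setminus\gen{x_j}$), whereas $\{h_{j,k},v\}$ matches the bridge rule and receives color $2$.

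The main obstacle I foresee is a delicate subcase in which $u$ is itself some bridge $h_{j',k'}$ with $(j',k')\neq (j,k)$, so that the edge $\{u,h_{j,k}\}$ might also be pulled into color $2$ by the bridge rule applied to the pair $(j',k')$, rendering the preferred path through $h_{j,k}$ monochromatic. In that situation I would reroute through a different bridge $h_{i,k}$ for which $u\in\gen{x_i}$ and $v\notin\gen{x_i}$; such an $i$ exists because $u$ lies in at least one maximal cyclic subgroup that $v$ avoids. Verifying that at least one such rerouting always escapes the collision---and hence that the coloring is globally rainbow-connected---is where the genuine technical work of the proof resides, and the distinctness of the $h_{i,j}$'s is invoked repeatedly to control these conflicts.
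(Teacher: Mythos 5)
There is a genuine gap, and you point at it yourself: the entire difficulty of the statement is the case in which an endpoint of the canonical path is itself one of the bridges, so that the edge $\{u,h_{j,k}\}$ is pulled into color $2$ and the path $(u,h_{j,k},v)$ becomes monochromatic. Your proposal only \emph{foresees} this and sketches a rerouting through some other bridge $h_{i,k}$ with $u\in\gen{x_i}$, $v\notin\gen{x_i}$, conceding that ``verifying that at least one such rerouting always escapes the collision'' is where the technical work lies. That verification is not a loose end; it is the proof. Nothing in the proposal rules out that the rerouted path suffers the same collision (the edge $\{u,h_{i,k}\}$ can again be forced to color $2$ by yet another pair, and the orientation of the pair $(i,k)$ in your asymmetric rule may make $\{h_{i,k},v\}$ default to color $1$ or not, depending on whether $i<k$), so rainbow connectedness of your coloring is simply not established. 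There is also a concrete unflagged failure: the hypothesis only requires the $h_{i,j}$ to be pairwise distinct, so exactly one of them may equal $e$, and nothing forces that pair to involve $x_1$. If $h_{j,k}=e$ for some $2\le j<k$ (e.g.\ because $\gen{x_j}\cap\gen{x_k}=\{e\}$ while the other intersections are nontrivial), then for $u=x_j$, $v=x_k$ the only common neighbours lie in $\gen{x_j}\cap\gen{x_k}=\{e\}$, the unique length-two path is $(x_j,e,x_k)$, and under your coloring both of its edges fall under the first color-$2$ rule; since a rainbow path in a $2$-coloring has at most two edges, $x_j$ and $x_k$ are not rainbow connected. A reindexing so that a trivial bridge, if any, is $h_{1,k}$ would repair this, but it must be said.

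For comparison, the paper's own coloring does not single out $x_1$ or route through $e$: for each pair $i<j$ it colors the edges joining $\gen{x_i}\setminus\gen{x_j}$ to $h_{i,j}$ with color $1$ and those joining $\gen{x_j}\setminus\gen{x_i}$ to $h_{i,j}$ with color $2$ (intra-subgroup edges being dismissed via Remark~\ref{edge}), so every canonical path $(u,h_{i,j},v)$ is bicolored \emph{by construction}, with the pairwise distinctness of the $h_{i,j}$ invoked to keep the per-pair prescriptions from clashing on bridge edges. Your scheme instead defaults the first edge to color $1$ and then has to fight the override cases, which is precisely the part left undone. (The paper's write-up is itself terse about edges joining two different bridges, but its symmetric per-pair assignment at least makes the intended paths rainbow without any rerouting argument, which your version cannot claim.)
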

\begin{proof}
By \ref{edge} we only give the coloring for $x_i$ and $x_j$ such that $i< j$. We fix
\begin{center}
$E_1(G)=\big\{\{a,h_{i,j}\}|a\in\gen{x_i}\setminus\gen{x_j}\big\}\bigcup\big\{\{a,b\}|a,b\in\gen{x_i}\big\}$\\
$E_2(G)=\big\{\{b,h_{i,j}\}|b\in\gen{x_j}\setminus\gen{x_i}\big\}\bigcup\big\{\{a,b\}|a,b\in\gen{x_j}\big\}$\\
\end{center}

Then, we always have a path for $x_{i_r}\in\gen{x_i}$ to $x_{j_s}\in\gen{x_j}$ given by $(x_{i_r},h_{i,j},x_{j_s})$ with $i<j$,  and the coloring is the same given in \ref{rc2}.
\end{proof}

\begin{figure}[!htb]
\centering
\includegraphics[scale=.15]{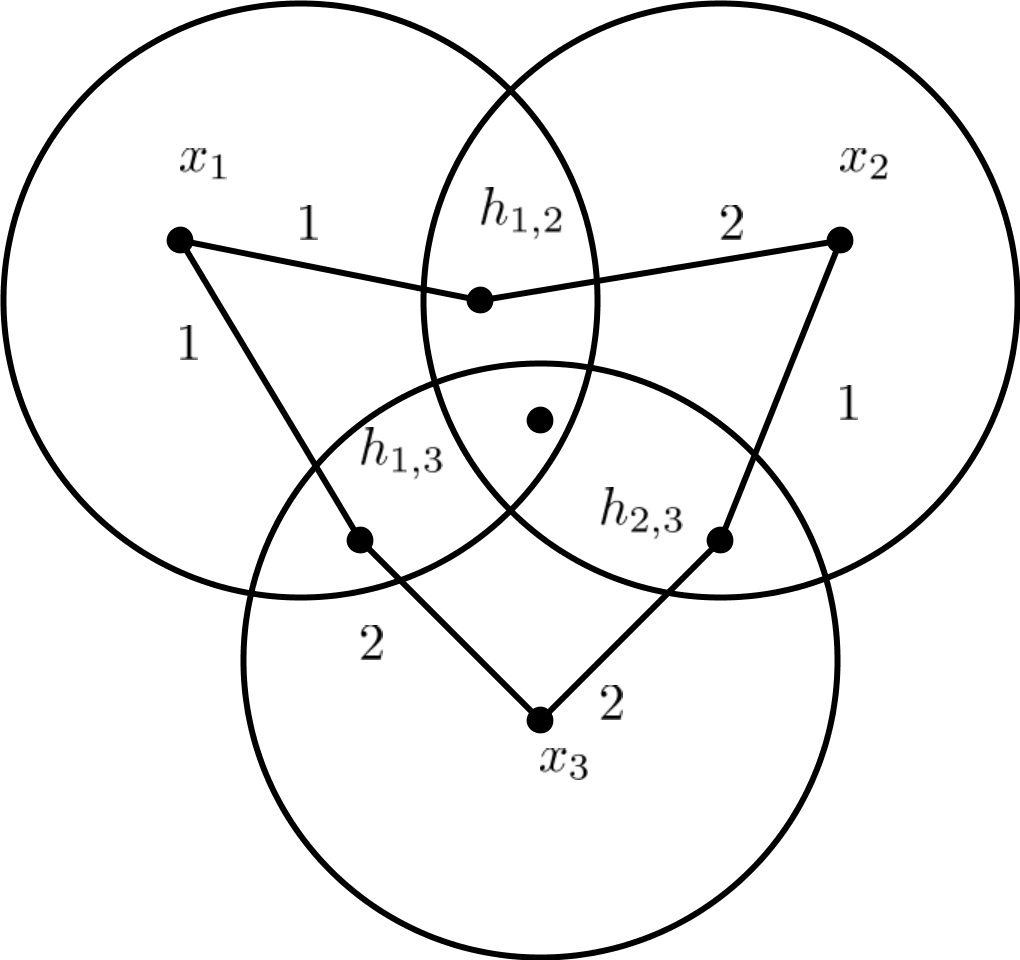}
\caption{Example for \ref{prop28} for $m=3$}
\end{figure}
The next definition guarantees the existence of a coloring for $\Gamma_G^e$.

\begin{definition}\label{awning}
  An \emph{awning} is a collection $H_1,...,H_{m-1}$ where the following occurs:
  \begin{enumerate}
    \item $H_i=A_i\mathop{\dot{\bigcup}} B_i=\{h_{i,i+1},...,h_{i,m}\}\subset\gen{X_i}$ for $i=1,...,m-1$
    \item for all $i<j$, $h_{i,j}\in\gen{x_i}\cap\gen{x_j}$
    \item for $i<j$ with $j=2,...,m-1$ , if $h_{j,s}=h_{i,r}\in H_j\cap H_i$ ($s\in\{j+1,...,m\}$, and $r\in\{i+1,...,m\}$), the following holds:
    \begin{enumerate}
      \item $r=j$, $h_{i,r}\in A_i$, then $h_{j,s}\in B_j$
      \item $r=j$, $h_{i,r}\in B_i$, then $h_{j,s}\in A_j$
      \item $r=s>j$, $h_{i,r}\in A_i$, then $h_{j,r}\in A_j$
      \item $r=s>j$, $h_{i,r}\in B_i$, then $h_{j,r}\in B_j$
    \end{enumerate}
  \end{enumerate}
\end{definition}

\begin{remark}\label{remawn}
The case in  \ref{prop27} is a particular case where $G$ has not an awning. By definition of awning we want to say, if we have an awning, then we only need $H_i=\{e\}$ for only some $i$, and no more.
\end{remark}

\begin{corollary}\label{cor1}
If $G$ has an awning and $|Max_G|\geq 3$, then $icn(G)\leq 1$. In particular, $|InMax_G|\leq 1$.
\end{corollary}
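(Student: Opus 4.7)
The plan is to argue by contradiction: suppose $icn(G)\geq 2$ and derive that item 3 of Definition \ref{awning} cannot be consistently satisfied by the partitions $A_i\mathop{\dot\cup} B_i$.

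First, I would fix two distinct elements $x_a,x_b\in ics(G)$. By the defining property of $ics(G)$, $\gen{x_a}\cap\gen{x_k}=\{e\}$ for every $k\neq a$ and $\gen{x_b}\cap\gen{x_k}=\{e\}$ for every $k\neq b$. Since each $h_{p,q}$ (with $p<q$) is required to lie in $\gen{x_p}\cap\gen{x_q}$, this forces $h_{p,q}=e$ whenever $a\in\{p,q\}$ or $b\in\{p,q\}$.

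Next, using $|Max_G|\geq 3$, I would pick a third index $c\notin\{a,b\}$ and let $i<j<k$ denote the sorted triple of $\{a,b,c\}$. Since at least two of $i,j,k$ belong to $\{a,b\}$, every one of the three pairs $(i,j),(i,k),(j,k)$ contains an element of $\{a,b\}$, hence
\[
h_{i,j}\;=\;h_{i,k}\;=\;h_{j,k}\;=\;e.
\]
In particular $e\in H_i$ and $e\in H_j$, so $e$ lies on exactly one side of the partition $A_i\mathop{\dot\cup}B_i$, and exactly one side of $A_j\mathop{\dot\cup}B_j$. Also $j\leq k-1\leq m-1$, so Definition \ref{awning}(3) is applicable to the pair $(i,j)$.

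The crux is to apply item 3 of the awning definition to $(i,j)$ twice, with the same equality $e=e$ interpreted via two different index choices. Taking $r=j,\ s=k$ triggers cases (a)/(b) ($r=j$), which give
\[
e\in A_i\ \Longrightarrow\ e\in B_j,\qquad e\in B_i\ \Longrightarrow\ e\in A_j.
\]
Taking $r=s=k>j$ triggers cases (c)/(d) ($r=s>j$), which give
\[
e\in A_i\ \Longrightarrow\ e\in A_j,\qquad e\in B_i\ \Longrightarrow\ e\in B_j.
\]
Whichever side of $H_i$ contains $e$, combining the two implications forces $e$ into both $A_j$ and $B_j$, contradicting the disjointness of $A_j\mathop{\dot\cup}B_j$. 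Hence the assumption $icn(G)\geq 2$ is untenable and $icn(G)\leq 1$. The ``in particular'' clause then follows at once from Remark \ref{contencion}, since $InMax_G\subseteq ics(G)$ implies $|InMax_G|\leq icn(G)\leq 1$.

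The only step that needs a moment of care is verifying that the same element $e$ genuinely appears with both the $r=j$ role and the $r=s>j$ role in condition 3; this is guaranteed because we can find three pairwise $e$-valued $h$'s among the sorted triple $i<j<k$, and the index-range requirement $j\leq m-1$ is automatic from $k\leq m$. Everything else is a direct unpacking of the awning axioms.
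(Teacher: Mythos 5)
Your proof is correct, but it takes a genuinely different route from the paper. The paper argues through the coloring machinery: it (implicitly) uses that an awning yields a rainbow $2$-coloring (Proposition \ref{prop}) and then observes that with two independent maximal cyclic subgroups $\gen{x_{i_1}},\gen{x_{i_2}}$ and a third subgroup, every rainbow path between them must be a length-two path through $e$, so the three edges at $e$ would need three pairwise distinct colors --- impossible with two, whence no awning (equivalently no rainbow $2$-coloring) can exist when $icn(G)\geq 2$ and $|Max_G|\geq 3$. You instead never mention colorings at all: you note that membership of $x_a,x_b$ in $ics(G)$ forces $h_{i,j}=h_{i,k}=h_{j,k}=e$ for the sorted triple $i<j<k$, and then you play cases (a)/(b) of Definition \ref{awning}(3) (the instance $r=j$, $s=k$) against cases (c)/(d) (the instance $r=s=k>j$) on the \emph{same} element $e\in H_i\cap H_j$, forcing $e\in A_j\cap B_j$ and contradicting the disjointness of the partition; the index condition $2\leq j\leq m-1$ is checked, and the ``in particular'' clause follows from Remark \ref{contencion} exactly as in the paper. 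Your argument buys self-containment and rigor --- it is a direct unpacking of the awning axioms and does not lean on Proposition \ref{prop} or on the claim that all relevant rainbow paths pass through $e$ (which the paper leaves largely implicit) --- while the paper's version buys a conceptual link between the corollary and the rainbow-connection framework it is building. Both establish the contrapositive that $icn(G)\geq 2$ with $|Max_G|\geq 3$ precludes an awning.
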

\begin{proof}
Suppose that $icn(G)=2$, then $H_{i_1}=H_{i_2}=\{e\}$. Hence $(x_{i_1},e,x_{i_2})$ is a rainbow path, and $(x_{i_1},e,x_{i_3})$ is another rainbow path, but in $(x_{i_2},e,x_{i_j})$ we have not a rainbow path for $\Gamma_G^e$.
\end{proof}

\begin{corollary}
  If $|\cap H_i|\geq m-1$ with $Max_G=m$, then $G$ has an awning.
\end{corollary}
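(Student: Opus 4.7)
The plan is to build an awning explicitly, using the hypothesis as a supply of common elements and making index-dependent choices so that the compatibility clauses (a)--(d) of Definition \ref{awning} become either vacuous or trivially satisfied. I read the hypothesis $|\cap H_i|\ge m-1$ as supplying $m-1$ distinct elements $g_1,\dots,g_{m-1}$ lying simultaneously in all the maximal cyclic subgroups $\langle x_1\rangle,\dots,\langle x_m\rangle$, which in particular lie in every pairwise intersection $\langle x_i\rangle\cap\langle x_j\rangle$.

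First I would define, for each $1\le i<j\le m$, the element
\[
h_{i,j} \;:=\; g_{j-1}.
\]
The decisive feature is that $h_{i,j}$ depends only on $j$. Condition 2 of Definition \ref{awning} is immediate, since each $g_k$ belongs to every $\langle x_i\rangle$. Setting $H_i:=\{h_{i,i+1},\dots,h_{i,m}\}=\{g_i,g_{i+1},\dots,g_{m-1}\}$ gives $m-i$ distinct elements inside $\langle x_i\rangle$, which verifies Condition 1.

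Next I would fix any partition $\{g_1,\dots,g_{m-1}\}=A\,\dot\cup\,B$ (for instance $A=\{g_1\}$, $B=\{g_2,\dots,g_{m-1}\}$) and set $A_i:=A\cap H_i$, $B_i:=B\cap H_i$. Then $H_i=A_i\,\dot\cup\,B_i$ by construction. To check clauses (a)--(d) suppose $h_{j,s}=h_{i,r}$ for some $i<j<s$ with $r\in\{i+1,\dots,m\}$. In cases (a) and (b) we have $r=j$, so equality of $h_{i,j}=g_{j-1}$ with $h_{j,s}=g_{s-1}$ would force $s=j$, contradicting $s>j$; thus (a) and (b) hold vacuously. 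In cases (c) and (d) we have $r=s>j$, and then $h_{i,r}=g_{r-1}=h_{j,r}$; since $g_{r-1}$ was placed in exactly one of $A$ or $B$, the two elements automatically lie in the same side of the partition, which is precisely what (c) and (d) demand.

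The main subtlety is step (a)/(b): a naive awning construction would have to decide, each time a coincidence $h_{i,j}=h_{j,s}$ arises, which side of the partition swaps, and these local demands can easily become inconsistent globally. The choice $h_{i,j}=g_{j-1}$ neutralises this by making such coincidences impossible, so every compatibility constraint that survives is of the ``same side'' type, which is trivially fulfilled by any global partition of $\{g_1,\dots,g_{m-1}\}$. Hence the exhibited data form an awning, proving that $G$ has one.
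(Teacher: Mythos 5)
Your proof is correct, and in fact the paper offers no argument at all for this corollary (it is stated bare, presumably as ``immediate from the definition''), so there is no official proof to compare against. Two points are worth recording. First, you had to repair the statement itself: as written, ``$|\cap H_i|\geq m-1$'' is circular (the $H_i$ only exist once an awning does, and since $H_{m-1}$ is a singleton the literal reading is vacuous for $m\geq 3$); your reading, that $\bigcap_{i=1}^m\langle x_i\rangle$ contains at least $m-1$ elements $g_1,\dots,g_{m-1}$, is the only one under which the corollary says anything, and you state this interpretive step explicitly, which is the honest thing to do. Second, your construction is exactly the right one and uses the hypothesis tightly: setting $h_{i,j}=g_{j-1}$ makes the hub element depend only on the column $j$, so the problematic coincidences $h_{i,j}=h_{j,s}$ of clauses (a)--(b) of Definition~\ref{awning} never occur (the $g_k$ are distinct and $s>j$), while every surviving coincidence is of the form $h_{i,r}=h_{j,r}=g_{r-1}$, which any single global partition $A\,\dot\cup\,B$ of $\{g_1,\dots,g_{m-1}\}$ handles for (c)--(d); note that the definition nowhere requires $A_i$ or $B_i$ to be nonempty, so $A_i=A\cap H_i$ is legitimate even when empty. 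The requirement of $m-1$ distinct common elements is genuinely needed for this trick ($j-1$ ranges over $1,\dots,m-1$); with a single common element such as $e$ the ``swap'' clauses (a)--(b) become an inconsistent alternation for $m\geq 3$, which is consistent with Proposition~\ref{prop27}. So your argument not only fills the gap the paper left but also clarifies why the bound $m-1$ appears in the hypothesis.
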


\begin{corollary}\label{cor2}
If $|Max_G|=2$ then $icn(G)=0$ or $2$, and $G$ has an awning.
\end{corollary}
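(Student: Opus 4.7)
The plan is to unpack both assertions directly from the definitions, using the fact that having only two maximal cyclic subgroups collapses the combinatorics.

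For the $icn(G) \in \{0,2\}$ part, I would observe that the defining condition for $x_i$ to lie in $ics(G)$ — namely $\langle x_i\rangle \cap \langle x_j\rangle = \{e\}$ for every $j \neq i$ — is symmetric in $x_1, x_2$ when $Max_G = \{x_1, x_2\}$: the condition on $x_1$ is the \emph{same} equation as the condition on $x_2$, since there is only one pair to test. Thus either both elements satisfy it and $ics(G) = Max_G$ (giving $icn(G)=2$), or neither does and $ics(G) = \emptyset$ (giving $icn(G)=0$). The intermediate value $icn(G)=1$ cannot occur.

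For the existence of an awning, I would specialize Definition \ref{awning} to $m=2$. We only need one layer $H_1 = A_1 \mathbin{\dot\cup} B_1 = \{h_{1,2}\}$ with $h_{1,2} \in \langle x_1\rangle \cap \langle x_2\rangle$. Since $e$ always belongs to $\langle x_1\rangle \cap \langle x_2\rangle$, we may set $h_{1,2}=e$ (or pick any other common element when $\langle x_1\rangle \cap \langle x_2\rangle \neq \{e\}$), and place it in $A_1$, with $B_1 = \emptyset$. Condition (3) of the awning definition ranges over $j = 2,\dots,m-1 = 1$, which is an empty range when $m=2$, so there is nothing left to verify. Hence the collection $\{H_1\}$ is an awning.

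The only thing that needs careful reading is ensuring the consistency clauses (3a)--(3d) are truly vacuous in the $m=2$ case, but this is immediate from the index range. Thus there is no real obstacle: the corollary is a direct specialization of the definitions.
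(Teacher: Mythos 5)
Your proof is correct and is precisely the direct definitional check that the paper leaves implicit (the corollary is stated there without proof): for $m=2$ the membership condition defining $ics(G)$ is the single symmetric condition $\langle x_1\rangle\cap\langle x_2\rangle=\{e\}$, so $icn(G)$ is $0$ or $2$, and an awning exists because one may take $H_1=\{h_{1,2}\}$ with $h_{1,2}=e\in\langle x_1\rangle\cap\langle x_2\rangle$, condition (3) of Definition \ref{awning} being vacuous when $m=2$. Nothing further is needed, and your choice $h_{1,2}=e$ is consistent with the paper's $2$-coloring in the $|Max_G|=2$ proposition.
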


\begin{corollary}\label{cor15}
  If $G$ has an awning, then $icn(G)=1$. In particular $|InMax_G|\leq 1$.
\end{corollary}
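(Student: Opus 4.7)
The plan is to combine Corollary~\ref{cor1} with the bipartition rules in definition~\ref{awning} to pin $icn(G)$ to exactly $1$ rather than just $\leq 1$. Corollary~\ref{cor1} already delivers $icn(G)\leq 1$ under the standing assumption $|Max_G|\geq 3$, and the ``in particular'' conclusion $|InMax_G|\leq 1$ is immediate from the inclusion $InMax_G\subseteq ics(G)$ recorded in remark~\ref{contencion}, so what actually needs proving is the lower bound $icn(G)\geq 1$.

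For that I would argue that every awning must contain at least one trivial slice $H_k=\{e\}$. The link to $ics(G)$ is immediate: if $H_k=\{e\}$ then $h_{k,j}=e$ for every $j>k$, which together with the symmetric role of $x_k$ in the other slices forces $\langle x_k\rangle\cap\langle x_j\rangle=\{e\}$ for all $j\neq k$, so $x_k\in ics(G)$. To produce such a $k$, I would start from a hypothetical awning in which no slice is trivial, pick a nontrivial shared element $h_{i_0,j_0}$, and follow the forced side-assignments dictated by rules (a)--(d) of definition~\ref{awning}: the labels $A$/$B$ propagate across overlapping slices, and tracing the implications around a triangle of indices $i<j<s$ eventually forces two contradictory side assignments on the same element, exactly the kind of contradiction exploited in the proof of Corollary~\ref{cor1}. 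That contradiction means at least one slice must collapse to $\{e\}$, which is precisely the informal assertion of remark~\ref{remawn}.

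The main obstacle is organising the case analysis of rules (a)--(d) cleanly, since a given element can belong to several slices simultaneously and the four rules interact. I would most likely carry it out by induction on $m=|Max_G|$, taking $m=3$ (the two awning-admitting configurations isolated in proposition~\ref{star}, where $icn(G)$ is indeed $1$) as the base case, and using the trivial slice $H_k=\{e\}$ identified above to peel off the generator $x_k$ and descend to an awning on $m-1$ generators. Combining this lower bound with Corollary~\ref{cor1} completes the proof of $icn(G)=1$, and the second sentence $|InMax_G|\leq 1$ is then a direct consequence of remark~\ref{contencion}.
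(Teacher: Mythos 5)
The part of your argument that matches what the paper can actually support is the upper bound: Corollary~\ref{cor1} gives $icn(G)\leq 1$ (when $|Max_G|\geq 3$), and $|InMax_G|\leq 1$ then follows from the inclusion $InMax_G\subseteq ics(G)$ of Remark~\ref{contencion}. The paper offers no argument beyond this, and for good reason: the lower bound $icn(G)\geq 1$ that you set out to prove is not true, so no propagation-of-labels or induction scheme can establish it. Take $G\cong\mathbb{Z}_6\times\mathbb{Z}_2\cong\mathbb{Z}_2\times\mathbb{Z}_2\times\mathbb{Z}_3$. Its maximal cyclic subgroups are the three subgroups of order $6$, and they pairwise intersect in the common subgroup $\{e,a,a^2\}$ of order $3$; hence $icn(G)=0$. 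Yet an awning exists: choose $h_{1,2}=a\in A_1$, $h_{1,3}=a^2\in A_1$, $h_{2,3}=a\in B_2$; the only coincidence $h_{2,3}=h_{1,2}$ falls under rule (a) of Definition~\ref{awning} and is satisfied, and indeed $rc(\Gamma_G^e)=2$ for this group. So ``awning $\Rightarrow icn(G)=1$'' fails (the paper itself concedes this later, in its final theorem, which treats $icn(G)=0$ together with an awning and $rc=2$); only ``$\leq 1$'' is salvageable.

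Two specific steps in your plan are where it breaks. First, the key lemma ``every awning contains a trivial slice $H_k=\{e\}$'' is false, as the example above shows; Remark~\ref{remawn} is asserting the opposite inequality (at most one slice can be $\{e\}$), not the existence of one. Second, even granting some $H_k=\{e\}$, your inference $x_k\in ics(G)$ does not follow: $h_{k,j}=e$ only says the \emph{chosen} common element is the identity, not that $\gen{x_k}\cap\gen{x_j}=\{e\}$ (the awning is free to pick $e$ inside a larger intersection), and moreover $H_k$ only records the pairs $(k,j)$ with $j>k$, while the pairs $(i,k)$ with $i<k$ live in $H_i$, so trivial intersection with \emph{all} other generators is not encoded in $H_k$ at all. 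Your proposed base case is likewise off: for $m=3$ Proposition~\ref{star} only handles $icn(G)\in\{1,3\}$ and says nothing about the $icn(G)=0$ configurations (such as the example above), which do admit awnings. The correct statement you can prove, and all the paper's later results need, is: if $G$ has an awning and $|Max_G|\geq 3$, then $icn(G)\leq 1$ and hence $|InMax_G|\leq 1$, exactly as in Corollary~\ref{cor1} combined with Remark~\ref{contencion}.
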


\begin{remark}
  We note that the coloring whether we have to $InMax_G$ or $ics(G)$ does not change, both can be colored by only one color. The only one difference due to in $InMax_G$ there is only two elements in the subset of $G$ and, for a set taken of  $ics(G)$ there are more than two elements but, the behaviour in coloring is exactly the same, because, in a set taken of $ics(G)$ all the elements are associated each them, then, one color is enough for coloring all set.
\end{remark}

In the following properties we only consider the set $ics(G)$ unless otherwise indicated.
\begin{proposition}\label{prop}
  If $G$ has an awning, then $rc(\Gamma_G^e)=2$
\end{proposition}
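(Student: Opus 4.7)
Since the hypothesis implies $|Max_G|\geq 2$, the group $G$ is non-cyclic and Proposition \ref{prop23} already gives $rc(\Gamma_G^e)\geq 2$. The proposition is therefore a matching upper bound, and the plan is to exhibit an explicit rainbow $2$-coloring $\zeta$ of $E(\Gamma_G^e)$ built directly from the awning data.

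The design is to use each $h_{i,j}$ as the gateway on a length-two rainbow path between $\langle x_i\rangle$ and $\langle x_j\rangle$, and to read off the color on each side of this gateway from the partitions $H_i=A_i\cup B_i$. Concretely, for every $i<j$ I would color every edge $\{u,h_{i,j}\}$ with $u\in\langle x_i\rangle\setminus\langle x_j\rangle$ by $1$ if $h_{i,j}\in A_i$ and by $2$ if $h_{i,j}\in B_i$, and color every edge $\{h_{i,j},v\}$ with $v\in\langle x_j\rangle\setminus\langle x_i\rangle$ by the opposite color. Remaining edges lie entirely inside a single $\langle x_i\rangle$ and touch no $h_{i,j}$, and can be colored freely. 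Once $\zeta$ is shown to be well defined, the rainbow property is immediate: two vertices in a common maximal cyclic subgroup are joined by a single edge, while $u\in\langle x_i\rangle$ and $v\in\langle x_j\rangle$ ($i<j$) not sharing a cyclic subgroup are joined by the path $(u,h_{i,j},v)$, whose two edges differ in color by construction.

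The main obstacle, and the reason the awning axioms are needed, is to show that $\zeta$ is consistent: no edge may be prescribed two different colors by two distinct pairs $(i,j)$ and $(i',j')$. A direct case analysis of the edge $\{u,h_{i,j}\}$, using that the defining endpoint condition $u\in\langle x_i\rangle\setminus\langle x_j\rangle$ rules out many would-be collisions, reduces the problem to the two scenarios in which a single group element $h$ plays a double gateway role. First, if $h=h_{i,j}=h_{j,s}$ with $i<j<s$, then axioms (a)--(b) give $h\in A_i \Longleftrightarrow h\in B_j$; this is exactly what is required for the $\langle x_j\rangle$-side prescription of pair $(i,j)$ (opposite of the $A_i/B_i$-class of $h$) to match the $\langle x_j\rangle$-side prescription of pair $(j,s)$ (the $A_j/B_j$-class of $h$). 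Second, if $h=h_{i,r}=h_{j,r}$ with $i<j<r$, then axioms (c)--(d) give $h\in A_i \Longleftrightarrow h\in A_j$, which similarly makes the two prescriptions on the $\langle x_r\rangle$-side edges through $h$ agree. These are the only collision patterns compatible with the awning's indexing, so $\zeta$ is well defined, is a rainbow $2$-coloring, and therefore $rc(\Gamma_G^e)=2$.
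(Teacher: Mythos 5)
Your proof is essentially the paper's own: the same gateway $2$-coloring read off from the $A_i/B_i$ partition (opposite colors on the two sides of each $h_{i,j}$), the same rainbow paths $(u,h_{i,j},v)$ between distinct maximal cyclic subgroups, and the same appeal to awning axioms (a)--(d) to reconcile the collisions $h_{i,j}=h_{j,s}$ and $h_{i,r}=h_{j,r}$. You state the color rule more explicitly than the paper (whose displayed edge sets repeat $E^1_{r,s}$ and $E^2_{r,s}$ verbatim in the $B_r$ case), and your unproved claim that these are the only collision patterns requiring attention is precisely the claim the paper itself makes without further justification.
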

\begin{proof}
  We will give to $\Gamma_G^e$ a rainbow 2-coloring, for $1\leq r < s \leq m$, let:\\
    \begin{center}
 $\begin{array}{lll}
  E_{r,s}^1&=&\{\{a,h_{r,s}\}|a\in\gen{x_r}\backslash\gen{x_s}; h_{r,s}\in A_r\}\\
  E_{r,s}^2&=&\{\{b,h_{r,s}\}|b\in\gen{x_s}\backslash\gen{x_r}; h_{r,s}\in A_r\}\\
  E_{r,s}^1&=&\{\{a,h_{r,s}\}|a\in\gen{x_r}\backslash\gen{x_s}; h_{r,s}\in B_r\}\\
  E_{r,s}^2&=&\{\{b,h_{r,s}\}|b\in\gen{x_s}\backslash\gen{x_r}; h_{r,s}\in B_r\}
  \end{array}$\\
 \end{center}
  Write $E_1=\displaystyle\bigcup_{1\leq r < s \leq m} E_{r,s}^1$ and $E_2=\displaystyle\bigcup_{1\leq r < s \leq m} E_{r,s}^2 $ and we define a coloring\\
    \begin{center}
 $ \begin{array}{rcl}
    \zeta: E(\Gamma_G^e) & \longrightarrow & \{1,2\} \\
    f & \mapsto & i,\quad \text{ if } f\in E_i
  \end{array} $
  \end{center}
We go to check that, this is a 2-coloring for $\Gamma_G^e$. We will make a coloring for $j,s-$step. If this edges have been colored in a before step, i.e., if $h_{j,s}=h_{i,r}$ with $i<j$, thus we will have coloring problems with $r=j$ or $r=s$.\\

For $r=j$ (r=s), for (a)-(d) from \ref{awning} we can guarantee in before step we can conserve the coloring and that, not affect us with the 2-coloring that we gave.
\end{proof}

\begin{proposition}\label{prop2}
If $rc(\Gamma_G^e)=2$, then for any order of $Max_G$, we have an awning.
\end{proposition}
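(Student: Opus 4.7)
Fix a rainbow $2$-coloring $\zeta\colon E(\Gamma_G^e)\to\{1,2\}$ and an arbitrary ordering $x_1,\dots,x_m$ of $Max_G$. I will construct the awning directly: for each $i<j$ take the midpoint $h_{i,j}$ of a rainbow $x_i$-$x_j$ path, set $H_i:=\{h_{i,j}:i<j\le m\}$, and partition
$$A_i:=\{h\in H_i:\zeta(x_i,h)=1\},\qquad B_i:=\{h\in H_i:\zeta(x_i,h)=2\}.$$

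\textbf{Step 1 (locating the hubs).} I first need to know that the rainbow path between $x_i$ and $x_j$ has length exactly $2$ and that its midpoint lies in $\gen{x_i}\cap\gen{x_j}$. The key observation is that distinct generators $x_i,x_j$ (with $i\neq j$) are never adjacent in $\Gamma_G^e$: an edge between them would place both in some common maximal cyclic subgroup $\gen{x_k}$, and the maximality of $\gen{x_i}$ and $\gen{x_j}$ would then force $\gen{x_i}=\gen{x_k}=\gen{x_j}$, contradicting $i\neq j$. Because a rainbow path can use at most $2$ colors, the guaranteed rainbow $x_i$-$x_j$ path has length exactly $2$; let $h_{i,j}$ be its midpoint. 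Applying the same maximality argument to $h_{i,j}$ (which is adjacent to both $x_i$ and $x_j$) yields $h_{i,j}\in\gen{x_i}\cap\gen{x_j}$, which is axiom (2) of \ref{awning}. Axiom (1) is built into the construction, and $H_i=A_i\cup B_i$ is a disjoint partition because each $h\in H_i$ is classified by the single color $\zeta(x_i,h)$.

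\textbf{Step 2 (checking (3)(a)--(d)).} Suppose $i<j$ and a single element $h=h_{i,r}=h_{j,s}$ lies in $H_i\cap H_j$. For (a), $r=j$ and $h\in A_i$ give $\zeta(x_i,h)=1$; rainbow-ness of $x_i$-$h$-$x_j$ then forces $\zeta(h,x_j)=2$, so $h=h_{j,s}$ lies in $B_j$; case (b) is symmetric. For (c), $r=s>j$ and $h\in A_i$ give $\zeta(x_i,h)=1$; the rainbow path $x_i$-$h$-$x_r$ forces $\zeta(h,x_r)=2$, and then the rainbow path $x_j$-$h$-$x_r$ forces $\zeta(x_j,h)=1$, placing $h=h_{j,r}$ in $A_j$; case (d) is symmetric.

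\textbf{Main obstacle.} The only non-routine step is the structural observation in Step~1 that every rainbow $x_i$-$x_j$ path has length exactly $2$ with midpoint in $\gen{x_i}\cap\gen{x_j}$; once this is established, conditions (a)--(d) are merely the rainbow-at-length-$2$ constraint read off from different base pairs. A minor wrinkle is that one group element may arise as $h_{i,j}$ for several different values of $j$, but this causes no ambiguity, since its class in $A_i/B_i$ depends only on the single edge color $\zeta(x_i,h)$.
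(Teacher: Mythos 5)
Your proposal is correct and follows essentially the same route as the paper's proof: choose the middle vertex $h_{i,j}\in\gen{x_i}\cap\gen{x_j}$ of a rainbow $x_i$--$x_j$ path and split each $H_i$ into $A_i$ and $B_i$ according to the color of the edge $\{x_i,h_{i,j}\}$. The only difference is that you spell out what the paper leaves implicit, namely that non-adjacency of distinct maximal generators forces the rainbow path to have length exactly $2$ with midpoint in $\gen{x_i}\cap\gen{x_j}$, and you verify conditions (a)--(d) explicitly.
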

\begin{proof}
We have $rc(\Gamma_G^e)=2$  and suppose $E_1\mathop{\dot{\bigcup}}E_2=E$ be the set of edges of $\Gamma_G^e$ and a 2-coloring give by \ref{rc2} and let $Max_G=\{x_1,...,x_m\}$ be an independence cyclic set of $\Gamma_G^e$, thus there is $h\in \gen{x_i}\cap\gen{x_j}$ such that $\{x_i,h\}\in E_1$ and $\{h,x_j\}\in E_2$ (or $\{x_i,h\}\in E_2$ and $\{h,x_j\}\in E_1$). We define $h_{i,j}:=h$, moreover $H_i:=\{h_{i,1},...,h_{i,m}\}=:A_i\mathop{\dot{\bigcup}}B_i$ such that
\begin{center}
  $A_i=\{h_{i,j}|\{x_i,h_{i,j}\}\in E_1\}$ and $B_i=\{h_{i,j}|\{x_i,h_{i,j}\}\in E_2\},$
\end{center}
where $(a)-(b)$ from \ref{awning} are met.
\end{proof}

\begin{corollary}
If $G$ has an awning with any order on $Max_G$, then for every order, $G$ has an awning.
\end{corollary}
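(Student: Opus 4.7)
The plan is to chain Propositions \ref{prop} and \ref{prop2}: the first converts an awning (for one chosen order) into a rainbow $2$-coloring of $\Gamma_G^e$, while the second recovers an awning for every order from the equality $rc(\Gamma_G^e)=2$. Since $rc(\Gamma_G^e)$ is an intrinsic graph invariant not tied to any labeling of $Max_G$, chaining the two propositions makes the existence of an awning order-independent.

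Concretely, I would start from an ordering $x_1,\ldots,x_m$ of $Max_G$ under which collections $H_1,\ldots,H_{m-1}$ form an awning. By Proposition \ref{prop} this produces a rainbow $2$-coloring of $\Gamma_G^e$, and hence $rc(\Gamma_G^e)\leq 2$. To rule out the value $1$, I would invoke Proposition \ref{prop23}: the presence of an awning requires $|Max_G|\geq 2$ (the index $m-1$ must be at least $1$ for the definition to be non-vacuous), so $G$ is not cyclic and $rc(\Gamma_G^e)\geq 2$. Therefore $rc(\Gamma_G^e)=2$. Applying Proposition \ref{prop2}, whose hypothesis depends only on this number and not on any chosen labeling, then yields an awning under every ordering of $Max_G$.

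The only point that requires a little care is verifying the sharp equality $rc(\Gamma_G^e)=2$ rather than the mere upper bound, which, as noted, follows from $G$ being noncyclic whenever an awning exists. Beyond that, the argument is a bookkeeping exercise in composing the two previous propositions, and no new combinatorial construction is needed.
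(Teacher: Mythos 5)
Your chaining of Proposition \ref{prop} (awning $\Rightarrow$ $rc(\Gamma_G^e)=2$, with the noncyclicity check to pin down the exact value) and Proposition \ref{prop2} ($rc(\Gamma_G^e)=2$ $\Rightarrow$ awning for any order) is exactly the route the paper intends; the corollary is stated there without proof precisely because it is this immediate composition, also reflected in Theorem \ref{thm1}. Your argument is correct and matches the paper's approach.
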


\begin{theorem}\label{thm1}
$rc(\Gamma_G^e)=2$ if only if $G$ has an awning and $G$ is not cyclic group.
\end{theorem}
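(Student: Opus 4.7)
The plan is to deduce Theorem \ref{thm1} by combining the two preceding propositions that already do the real work, namely Proposition \ref{prop} (awning $\Rightarrow$ $rc=2$) and Proposition \ref{prop2} ($rc=2$ $\Rightarrow$ awning), and to use Proposition \ref{prop23} to handle the cyclic/non-cyclic bookkeeping. The argument naturally splits into the two directions of the biconditional.

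For the implication ($\Leftarrow$): suppose $G$ is not cyclic and $G$ admits an awning $H_1,\ldots,H_{m-1}$. Since Definition \ref{awning} requires a nonempty collection, the existence of an awning already forces $|Max_G|=m\geq 2$, which is consistent with $G$ being non-cyclic (Proposition \ref{prop23}). Proposition \ref{prop} then directly produces a rainbow $2$-coloring of $\Gamma_G^e$, so $rc(\Gamma_G^e)\leq 2$. On the other hand, $G$ non-cyclic gives $\Gamma_G^e$ non-complete by Proposition \ref{prop23}, hence $rc(\Gamma_G^e)\geq 2$. Combining the two inequalities yields $rc(\Gamma_G^e)=2$.

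For the implication ($\Rightarrow$): suppose $rc(\Gamma_G^e)=2$. Then $rc(\Gamma_G^e)\neq 1$, so by Proposition \ref{prop23} the group $G$ is not cyclic, which takes care of the second conjunct. For the first conjunct, one simply applies Proposition \ref{prop2} to the hypothesis $rc(\Gamma_G^e)=2$: fixing any order on $Max_G$ and any rainbow $2$-coloring, the construction in the proof of Proposition \ref{prop2} produces the intersection elements $h_{i,j}$ and the partition $H_i=A_i\mathop{\dot\bigcup}B_i$ satisfying conditions (1)--(3) of Definition \ref{awning}.

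There is essentially no obstacle here beyond invoking the earlier results cleanly: the content has been front-loaded into Propositions \ref{prop} and \ref{prop2}. The only thing to be careful about is to verify that the statement of Theorem \ref{thm1} agrees with those propositions regarding the trivial/degenerate case $|Max_G|=1$, which is exactly why the hypothesis ``$G$ is not cyclic'' must appear on the right-hand side of the biconditional (otherwise a cyclic $G$, for which the awning condition holds vacuously, would be a counterexample since it has $rc(\Gamma_G^e)=1\neq 2$).
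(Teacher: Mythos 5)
Your proposal is correct and follows essentially the same route as the paper, which also deduces the theorem directly from Proposition \ref{prop} and Proposition \ref{prop2} (with Proposition \ref{prop23} implicitly handling the cyclic case). You simply spell out the two directions and the non-cyclic bookkeeping more explicitly than the paper does.
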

\begin{proof}
By \ref{prop} and \ref{prop2}
\end{proof}

By \ref{contencion} we obtain a similar proposition like \cite[Lemma 2.2]{Ma}.

\begin{lemma}
Let $Max_G=\{x_1,...,x_m\}$ be an essential cyclic set. If $InMax_G\neq\emptyset$, then $|InMax_G|\leq rc(\Gamma_G^e).$
\end{lemma}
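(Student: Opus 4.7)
The plan is to exploit the extreme structural poverty of a maximal involution's neighborhood in $\Gamma_G^e$: a maximal involution $x$ has $\langle x \rangle = \{e, x\}$ as a maximal cyclic subgroup, so the only $z \in G$ with $x \in \langle z \rangle$ are those whose cyclic subgroup contains $\langle x \rangle$, which by maximality forces $\langle z \rangle = \langle x \rangle$, hence $z \in \{e, x\}$. Consequently, if $y \neq x$ is adjacent to $x$ in $\Gamma_G^e$, then $y \in \langle z \rangle = \{e, x\}$ forces $y = e$. Thus every $x \in InMax_G$ is a pendant vertex whose unique neighbor is $e$.

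From this observation the theorem drops out almost immediately. Let $x_i, x_j$ be two distinct elements of $InMax_G$. Since each has $e$ as its unique neighbor, the only path in $\Gamma_G^e$ from $x_i$ to $x_j$ is $(x_i, e, x_j)$. In any rainbow coloring $\zeta$ of $\Gamma_G^e$, this path must itself be rainbow, which forces $\zeta(\{x_i,e\}) \neq \zeta(\{x_j,e\})$. Applying this to every pair of distinct elements of $InMax_G$ shows that the $|InMax_G|$ edges of the form $\{x,e\}$ with $x \in InMax_G$ must all carry pairwise distinct colors. Therefore $\zeta$ uses at least $|InMax_G|$ colors, giving $rc(\Gamma_G^e) \geq |InMax_G|$.

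There is essentially no obstacle: the work is all in unpacking the definition of a maximal involution and observing that maximality of $\langle x \rangle$ as a cyclic subgroup (combined with $|\langle x \rangle|=2$) collapses the neighborhood of $x$ in $\Gamma_G^e$ to $\{e\}$. Once this pendant property is established, the uniqueness of the $x_i$–$x_j$ path and the rainbow requirement do the rest. The only minor subtlety is to confirm that the hypothesis $InMax_G \neq \emptyset$ guarantees $e$ is genuinely present as a vertex and that $G$ is noncyclic when $|InMax_G| \geq 2$ (both are automatic: $e \in G = V(\Gamma_G^e)$, and two distinct maximal cyclic subgroups prevent cyclicity, so the constructed two-edge paths are indeed the unique connections).
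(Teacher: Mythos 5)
Your proof is correct and is essentially the argument the paper invokes: the paper simply defers to the proof of Lemma 2.2 in the cited work of Ma, Feng and Wang, which is exactly this pendant-vertex argument (a maximal involution's only neighbor is $e$, so the unique $x_i$--$x_j$ path $(x_i,e,x_j)$ forces pairwise distinct colors on the edges $\{x,e\}$, $x\in InMax_G$). You have merely spelled out, for the enhanced power graph, the same reasoning the paper cites by reference.
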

\begin{proof}
As in the proof of \cite[Lemma 2.2]{Ma}.
\end{proof}

\begin{proposition}\label{prop221}
Let $Max_G=\{x_1,...,x_m\}$ be an essential cyclic set. If $icn(G)\geq 3$, then $3\leq rc(\Gamma_G^e).$
\end{proposition}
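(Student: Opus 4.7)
The plan is to derive this by contradiction, using the characterization of the case $rc(\Gamma_G^e)=2$ already established in Theorem \ref{thm1}. Since $ics(G)\subseteq Max_G$ by Remark \ref{contencion}, the hypothesis $icn(G)\geq 3$ immediately yields $|Max_G|\geq 3$. In particular $G$ is not cyclic, so Proposition \ref{prop23} gives the lower bound $rc(\Gamma_G^e)\geq 2$, and what remains is to exclude the value $2$.

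Suppose, for contradiction, that $rc(\Gamma_G^e)=2$. Theorem \ref{thm1} then forces $G$ to admit an awning. Corollary \ref{cor1} applies under exactly these hypotheses: an awning together with $|Max_G|\geq 3$ implies $icn(G)\leq 1$, which flatly contradicts our standing assumption $icn(G)\geq 3$. Hence the value $2$ is impossible, and we conclude $rc(\Gamma_G^e)\geq 3$.

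Because the earlier machinery (the awning formalism, Theorem \ref{thm1}, and Corollary \ref{cor1}) absorbs essentially all the combinatorial content, there is no genuine obstacle in this proof; the argument is just a short packaging of those results. If instead one wanted a self-contained derivation avoiding Theorem \ref{thm1}, the natural route would be to pick three representatives $x_1,x_2,x_3\in ics(G)$ and observe that the $ics$ condition forces every non-identity element of $\gen{x_i}$ to have neighbors only inside $\gen{x_i}$; consequently any path between vertices lying in different $\gen{x_i}$'s must pass through $e$. A pigeonhole argument on the three spoke edges $\{x_i,e\}$ (only two available colors, three pairs to rainbow-connect with the colors at $e$ forced to differ on entry and exit) would then yield a direct contradiction. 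This is, however, exactly the content already encoded in the awning corollary, so the short reduction above is preferable.
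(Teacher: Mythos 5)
Your reduction is correct and non-circular: Theorem \ref{thm1} rests on Propositions \ref{prop} and \ref{prop2}, and Corollary \ref{cor1} precedes Proposition \ref{prop221}, so none of them depends on the statement being proved. The chain $icn(G)\geq 3 \Rightarrow |Max_G|\geq 3 \Rightarrow G$ not cyclic $\Rightarrow rc(\Gamma_G^e)\geq 2$, followed by the exclusion of $rc(\Gamma_G^e)=2$ via Theorem \ref{thm1} and Corollary \ref{cor1}, is sound. However, your route is genuinely different from the paper's. The paper argues directly: it asserts that the subgraph induced by $\gen{x_1}\cup\cdots\cup\gen{x_k}$ (with $x_1,\dots,x_k\in ics(G)$, $k\geq 3$) admits no rainbow 2-coloring, and then devotes most of the proof to exhibiting explicit 3-colorings (split into the cases $|InMax_G|=0$ and $|InMax_G|\geq 3$), which really addresses an upper bound rather than the stated lower bound; the impossibility of a 2-coloring is left essentially unargued there. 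Your packaging through Theorem \ref{thm1} and Corollary \ref{cor1} buys brevity and modularity, but it inherits whatever weakness those results carry --- in particular the paper's proof of Corollary \ref{cor1} is itself only sketched (it treats $icn(G)=2$). For that reason your appended direct argument is more than a remark: the observation that for $x_i\in ics(G)$ every neighbor of a non-identity element of $\gen{x_i}$ lies in $\gen{x_i}$, so any path between distinct $\gen{x_i}$, $\gen{x_j}$ passes through $e$, the generators $x_i$ are pairwise non-adjacent, and hence a 2-coloring would require the three spoke edges $\{x_1,e\},\{x_2,e\},\{x_3,e\}$ to receive pairwise distinct colors, is precisely the combinatorial obstruction the paper gestures at but never writes down; including it makes your proof the more complete of the two.
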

\begin{proof}

  Suppose that $|InMax|=0$ and $ics(G)=\{x_1,...,x_k\}$ be an independence cyclic set with $k\geq 3$. We can not give a 2-coloring for the graph induced by $\gen{x_1}\displaystyle\cup\cdots\cup\gen{x_k}$, but we will give a 3-coloring induced by the following edge sets
  $$\begin{array}{lll}
    E_1 & = & \big\{\{x_i,e\}|i=1,...,m   \big\} \\
    E_2& = & \big\{\{e,x_{i_j}\}|x_{i_j}\in\bigcup_{i=1}^{m}\gen{x_i}\setminus x_i  \big\} \\
    E_3 & = & \big\{\{a,b\}|a,b\in\gen{x_i}\text{ for each }i  \big\}
  \end{array}$$
\begin{figure}[!htb]
\centering
\includegraphics[scale=.15]{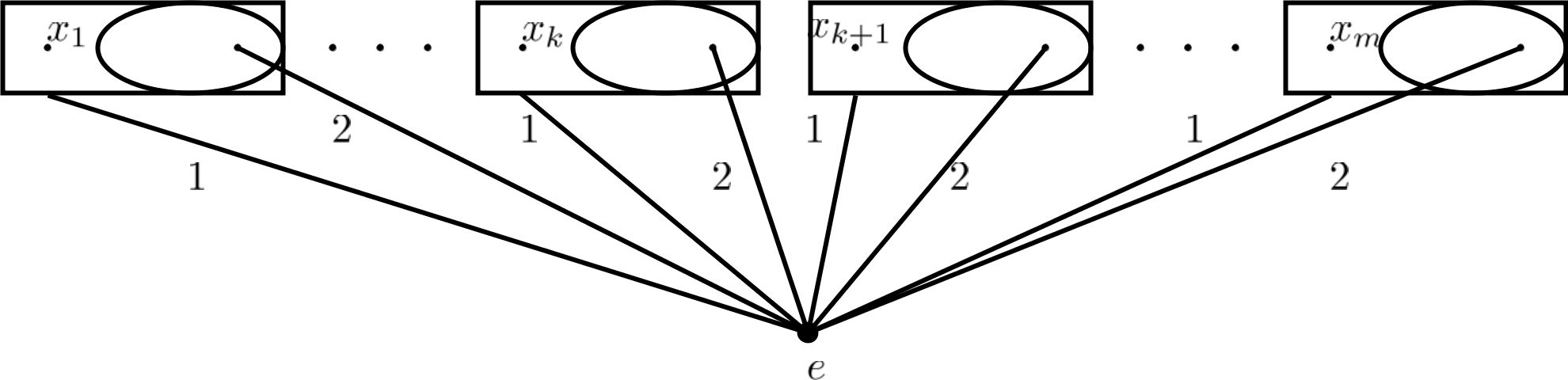}
\caption{$InMax_G=\emptyset$}
\end{figure}

  with the rest edges just like \ref{prop28} and \ref{prop}. Thus the 3-coloring is given by \ref{rc3}.\\

  If $|InMax_G|\geq 3$ then the edges set is
  $$\begin{array}{lll}
    E_1 & = & \big\{\{x_i,e\}|i=l+1,...,m   \big\} \\
    E_2& = & \big\{\{e,x_{i_j}\}|x_{i_j}\in\bigcup_{i=l+1}^{m}\gen{x_i}\setminus x_i  \big\} \\
    E_3 & = & \big\{\{a,b\}|a,b\in\gen{x_i}\text{ for each }i  \big\}\\
    E_i &=& \big\{\{x_i,e\}|i=1,...,l \big\}
  \end{array}$$
and the coloring given by
  $$\begin{array}{lll}
  \zeta:E(G)&\longrightarrow &\{1,...,l\}\\
  f&\mapsto & i
  \end{array} \text{ if }f\in E_i  $$
 \begin{figure}[!htb]
\centering
\includegraphics[scale=.15]{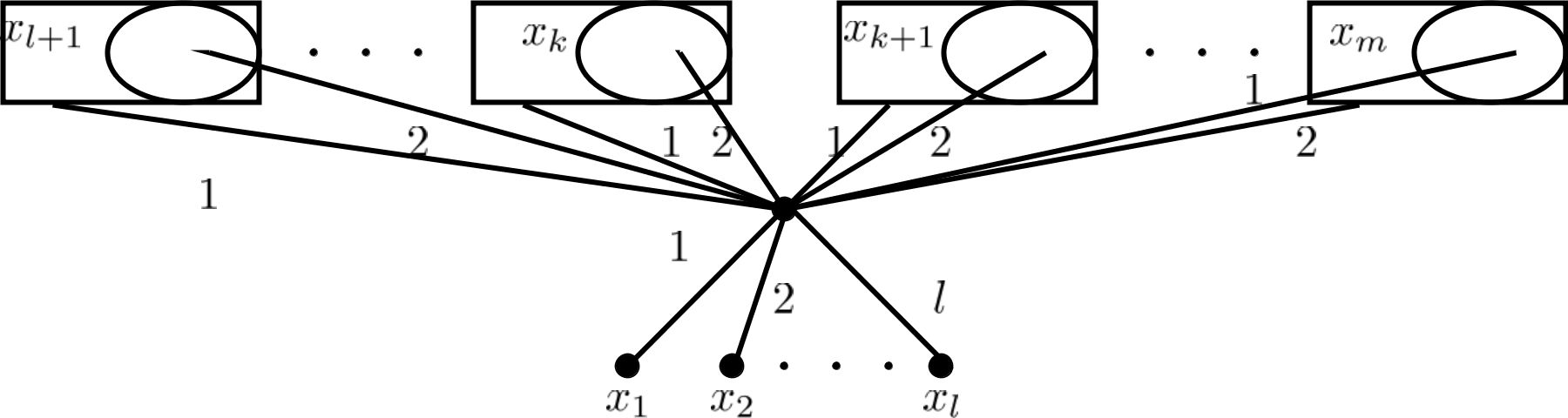}
\caption{$InMax_G\neq\emptyset$}
\end{figure}

\end{proof}
In particular we have the following

\begin{proposition}\label{prop3}
  Let $Max_G=\{x_1,...,x_m\}$ be a essential cyclic group with $m\geq 4$ and $icn(G)\geq 2$, then $3\leq rc(\Gamma_G^e)$.
\end{proposition}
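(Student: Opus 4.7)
The plan is to argue by contradiction, supposing $rc(\Gamma_G^e)\leq 2$, and producing three pairwise conflicting edges incident to the identity. Since $|Max_G|=m\geq 4\geq 2$, Proposition \ref{prop23} already rules out $rc(\Gamma_G^e)=1$, so I only need to exclude the value $2$. I split on $icn(G)$: the case $icn(G)\geq 3$ is immediate from Proposition \ref{prop221}, so the substantive case is $icn(G)=2$. Note that this case is only meaningful because $m\geq 4$: if $m=3$, the three pairwise intersections would chain and force $icn(G)\in\{0,1,3\}$, so $m\geq 4$ is exactly what allows $icn(G)=2$ to occur.

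So assume $ics(G)=\{x_1,x_2\}$ and pick any third $x_3\in Max_G$. By the defining property of $ics(G)$, both $\langle x_1\rangle$ and $\langle x_2\rangle$ meet every other maximal cyclic subgroup only at $e$, so the three pairwise intersections $\langle x_i\rangle\cap\langle x_j\rangle$ for distinct $i,j\in\{1,2,3\}$ all equal $\{e\}$. I would then invoke the structural observation (implicit in Remark \ref{edge}) that if $x$ generates a maximal cyclic subgroup, its neighbours in $\Gamma_G^e$ are exactly $\langle x\rangle\setminus\{x\}$: any neighbour $y$ satisfies $x,y\in\langle z\rangle$ for some $z$, and maximality of $\langle x\rangle$ forces $\langle z\rangle=\langle x\rangle$, hence $y\in\langle x\rangle$.

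Now suppose for contradiction that $\zeta$ is a rainbow $2$-coloring. For each distinct pair $i,j\in\{1,2,3\}$, any rainbow path from $x_i$ to $x_j$ has at most $2$ edges (only two colours are available). A length-$1$ path is impossible since adjacency of $x_i,x_j$ would force $x_j\in\langle x_i\rangle$, contradicting $\langle x_i\rangle\cap\langle x_j\rangle=\{e\}$; and by the structural observation any length-$2$ path must pass through a common neighbour, which lies in $\langle x_i\rangle\cap\langle x_j\rangle=\{e\}$. Hence the unique rainbow path must be $(x_i,e,x_j)$, giving $\zeta(\{x_i,e\})\neq \zeta(\{e,x_j\})$ for every pair. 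Applying pigeonhole to the three edges $\{e,x_1\},\{e,x_2\},\{e,x_3\}$ coloured from only two colours, two of them must share a colour, say $\{e,x_i\}$ and $\{e,x_j\}$; then the forced path $(x_i,e,x_j)$ is not rainbow, a contradiction. This yields $rc(\Gamma_G^e)\geq 3$.

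The main obstacle is simply recognising that in the regime $icn(G)=2$ one can still harvest a third maximal cyclic subgroup $\langle x_3\rangle$ whose intersection with each of $\langle x_1\rangle,\langle x_2\rangle$ is trivial, even though $x_3$ itself need not lie in $ics(G)$; once this is observed, the proof reduces to the same star-configuration pigeonhole already used in Proposition \ref{star} for the $icn=3$ case. Alternatively, one could bypass the direct argument entirely by combining Theorem \ref{thm1} with Corollary \ref{cor1}, since an awning together with $|Max_G|\geq 3$ forces $icn(G)\leq 1$, contradicting $icn(G)\geq 2$.
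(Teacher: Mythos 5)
Your proposal is correct, and it actually supplies something the paper does not: the paper states this proposition with no proof of its own, presenting it as an ``in particular'' consequence of Proposition \ref{prop221}, which only treats $icn(G)\geq 3$ (and whose proof there is a coloring construction rather than a lower-bound argument), so the genuinely new case $icn(G)=2$ is left unaddressed. Your key observation fills exactly that gap: if $ics(G)=\{x_1,x_2\}$, then the definition of $ics(G)$ forces any third maximal cyclic subgroup $\langle x_3\rangle$ to meet both $\langle x_1\rangle$ and $\langle x_2\rangle$ only in $e$, even though $x_3$ itself need not belong to $ics(G)$; together with the (correct) remark that the neighbours of a generator of a maximal cyclic subgroup lie in that subgroup, every rainbow path of length at most $2$ between $x_i$ and $x_j$ must be $(x_i,e,x_j)$, and the pigeonhole on the three edges $\{e,x_1\},\{e,x_2\},\{e,x_3\}$ kills any $2$-coloring. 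This is the same star-configuration idea that appears informally at the end of the paper's Proposition \ref{star} for $icn(G)=3$, but you execute it rigorously and in the regime the paper skips; your remark that $icn(G)=m-1$ is impossible (so $icn(G)=2$ really requires $m\geq 4$) is also accurate. Your alternative derivation via Theorem \ref{thm1} and Corollary \ref{cor1} mirrors the paper's global logic, though it inherits the weaknesses of those proofs, so the direct argument is the stronger contribution. The only cosmetic point: when excluding a length-one path you implicitly use $x_j\neq e$, which holds since $m\geq 4$ makes every maximal cyclic subgroup nontrivial; it is worth saying explicitly.
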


\begin{remark}
We have $rc(\Gamma_G^e)\leq rc(\Gamma_G)$ because $E(\Gamma_G)\subseteq E(\Gamma_G^e).$
\end{remark}

\section{Main theorems}
In this section we prove our main theorems.

\begin{theorem}
  Let $Max_G=\{x_1,...,x_m\}$ be an essential cyclic set. If $icn(G)=1$ then $rc(\Gamma_G^e)=1$ if only if $m=1$. In particular, if $|InMax_G|=1$ then $rc(\Gamma_G^e)=1$ if only if $G\cong\mathbb{Z}_2$.
\end{theorem}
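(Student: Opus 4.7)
The plan is to reduce the entire statement to Proposition \ref{prop23}, which gives the triple equivalence $|Max_G|=1 \iff G \text{ cyclic} \iff rc(\Gamma_G^e)=1$. Since $m = |Max_G|$ by definition, the biconditional $rc(\Gamma_G^e)=1 \iff m=1$ is an immediate consequence. The hypothesis $icn(G)=1$ plays a framing role rather than a logical one: in the $m=1$ case, $ics(G) = Max_G = \{x_1\}$ (the intersection condition is vacuous), so $icn(G)=1$ is automatic and consistent with the conclusion.

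For the forward direction I would assume $rc(\Gamma_G^e)=1$ and invoke Proposition \ref{prop23} to conclude $G$ is cyclic, hence $Max_G=\{x_1\}$ and $m=1$. For the reverse direction I would assume $m=1$, again apply Proposition \ref{prop23} (in the direction $|Max_G|=1 \Rightarrow G$ cyclic $\Rightarrow rc(\Gamma_G^e)=1$), and conclude.

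For the ``in particular'' clause I would argue as follows. Suppose $|InMax_G|=1$ and let $x$ be the unique maximal involution, so $\langle x \rangle = \{e, x\}$ is a maximal cyclic subgroup of $G$. The reverse direction is trivial: $\mathbb{Z}_2$ is cyclic, hence $rc(\Gamma_G^e)=1$ by Proposition \ref{prop23}. For the forward direction, assume $rc(\Gamma_G^e)=1$; then $G$ is cyclic by Proposition \ref{prop23}. The key observation is that a cyclic group $G$ admits a unique maximal cyclic subgroup, namely $G$ itself, since every cyclic subgroup is contained in $G = \langle g \rangle$ for a generator $g$. Therefore $\langle x \rangle = G$, forcing $|G|=2$ and $G \cong \mathbb{Z}_2$.

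The main obstacle here is not technical but conceptual: recognizing that the theorem is essentially a repackaging of Proposition \ref{prop23} under the stated hypotheses, with the only genuinely new content being the observation that a cyclic group possesses a unique maximal cyclic subgroup. That collapse is what forces the involution case down to $\mathbb{Z}_2$; everything else follows formally.
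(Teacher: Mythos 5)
Your proposal is correct and follows essentially the same route as the paper, whose proof is simply a citation of Propositions \ref{prop1}, \ref{prop23} and \ref{prop27}: the statement reduces to the equivalence $rc(\Gamma_G^e)=1 \iff G$ cyclic $\iff |Max_G|=1$. Your added observation that a cyclic group has a unique maximal cyclic subgroup (itself), which forces the involution case down to $\mathbb{Z}_2$, is exactly the implicit content the paper leaves unspoken, so there is no substantive difference in approach.
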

\begin{proof}
  By \ref{prop1}, \ref{prop23} and \ref{prop27}.
\end{proof}

\begin{theorem}
  Let $Max_G=\{x_1,...,x_m\}$ be an essential cyclic set. If $icn(G)=1$ then $rc(\Gamma_G^e)=2$ if only if $G$ has an awning.
\end{theorem}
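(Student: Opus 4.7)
The plan is to observe that this statement is a direct corollary of Theorem~\ref{thm1} under the additional hypothesis $icn(G)=1$, and accordingly to invoke Propositions~\ref{prop} and~\ref{prop2} as the two halves of the biconditional.

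For the implication from awning to $rc(\Gamma_G^e)=2$, I would simply apply Proposition~\ref{prop}, which already constructs an explicit rainbow $2$-coloring of $\Gamma_G^e$ from the awning data $H_i=A_i\mathop{\dot{\bigcup}}B_i$. Under the hypothesis $icn(G)=1$, combined with the fact that any non-trivial awning forces $|Max_G|\geq 2$ by Definition~\ref{awning}, the graph $\Gamma_G^e$ is not complete, so by Proposition~\ref{prop1} we have $rc(\Gamma_G^e)\geq 2$, and equality follows.

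For the converse, I would invoke Proposition~\ref{prop2}: given any rainbow $2$-coloring $\zeta$ of $\Gamma_G^e$ and any ordering of $Max_G$, the proposition produces common elements $h_{i,j}\in\gen{x_i}\cap\gen{x_j}$ for each $i<j$, partitions each $H_i$ into $A_i$ and $B_i$ according to the color class of the incident edge at $x_i$, and verifies the compatibility conditions (a) and (b) of Definition~\ref{awning}; conditions (c) and (d) follow from the observation that if two entries $h_{i,r}$ and $h_{j,s}$ coincide as group elements, then the edges meeting them at a common maximal generator must carry a single color and so land in the same part of the partition.

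The main (largely notational) obstacle is the edge case $m=1$, in which $icn(G)=1$ holds vacuously but $G$ is cyclic with $rc(\Gamma_G^e)=1$; this case is excluded on the forward side by the assumption $rc(\Gamma_G^e)=2$, and on the backward side by interpreting the awning definition as requiring the non-empty collection $H_1,\ldots,H_{m-1}$ with $m\geq 2$. Beyond this, the proof is a straightforward citation of the two earlier propositions.
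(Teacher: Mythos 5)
Your proposal is correct, and it reaches the statement by a slightly different route than the paper. The paper's own proof is a bare citation of Proposition~\ref{star}, Corollary~\ref{cor15} and Proposition~\ref{prop}; in particular, for the direction ``$rc(\Gamma_G^e)=2$ implies $G$ has an awning'' the paper leans on \ref{star} and \ref{cor15}, which only cover special situations (the $|Max_G|=3$ case, and the implication awning $\Rightarrow icn(G)=1$, which points the wrong way), so that direction is really being supplied implicitly by Theorem~\ref{thm1}. You instead cite Proposition~\ref{prop} for the backward implication and Proposition~\ref{prop2} for the forward one, i.e.\ you read the statement off as a restriction of Theorem~\ref{thm1} to the case $icn(G)=1$; this is cleaner, since \ref{prop2} is exactly the converse construction needed, and your sketch of why conditions (c)--(d) of Definition~\ref{awning} hold (each edge $\{x_j,h\}$ has a single well-defined color, which determines the $A_j$/$B_j$ membership consistently across pairs sharing the same witness $h$) is in fact more explicit than the paper's own proof of \ref{prop2}, which only asserts (a)--(b). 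Your handling of the $m=1$ edge case (excluded on one side by $rc=2$, on the other by requiring the awning collection to be non-trivial) mirrors the ``$G$ is not cyclic'' clause of Theorem~\ref{thm1} and is consistent with Proposition~\ref{prop23}, so nothing is lost there.
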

\begin{proof}
  By \ref{star}, \ref{cor15} and \ref{prop}.
\end{proof}

\begin{theorem}
  Let $Max_G=\{x_1,...,x_m\}$ be a essential cyclic set with $m\geq 3$. If $icn(G)=1$ then $rc(\Gamma_G^e)=3$ if only if $G$ has not an awning.
\end{theorem}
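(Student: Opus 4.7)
The plan is to split the biconditional into its two directions. The forward implication is immediate from Theorem~\ref{thm1}: if $rc(\Gamma_G^e) = 3$ then in particular $rc(\Gamma_G^e) \neq 2$, so Theorem~\ref{thm1} forces the absence of an awning.

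For the substantive converse -- assuming $icn(G) = 1$, $m \geq 3$, and that $G$ has no awning -- I will sandwich $rc(\Gamma_G^e) = 3$ between matching bounds. The lower bound $rc(\Gamma_G^e) \geq 3$ follows at once by combining Proposition~\ref{prop23} (so $rc \neq 1$, since $m \geq 3$ makes $G$ non-cyclic) with Theorem~\ref{thm1} (so $rc \neq 2$, since no awning exists). The real work is the upper bound $rc(\Gamma_G^e) \leq 3$, which I plan to establish by exhibiting an explicit rainbow 3-coloring modelled on the construction in Proposition~\ref{prop221}.

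Write $x_1$ for the unique element of $ics(G)$, so that $\gen{x_1}\cap\gen{x_j} = \{e\}$ for every $j \geq 2$. Before constructing the coloring I would record a useful preliminary observation: any involution in $Max_G$ necessarily lies in $ics(G)$, because a two-element maximal cyclic subgroup cannot be contained in a larger maximal cyclic subgroup without contradicting its own maximality. Consequently at most $x_1$ itself is an involution, and $|\gen{x_j}| \geq 3$ for every $j \geq 2$, which supplies the non-generator ``detour'' vertices needed in the verification step. The coloring is then defined by the partition
\begin{center}
$E_1 = \{\{x_i,e\} : 1 \leq i \leq m\}, \quad E_2 = \{\{e,a\} : a \in \gen{x_i}\setminus\{e,x_i\}\ \text{for some } i\}, \quad E_3 = E(\Gamma_G^e)\setminus(E_1\cup E_2),$
\end{center}
assigning color $i$ to each $E_i$; disjointness of $E_1$ and $E_2$ is guaranteed by maximality, which prevents $x_j \in \gen{x_i}$ when $i \neq j$.

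The step I expect to be the main obstacle is the case analysis that verifies the rainbow property. For vertices $u \in \gen{x_i}$, $v \in \gen{x_j}$ with $i \neq j$, the intended rainbow path is one of $(u,e,v)$, $(u,x_i,e,v)$, or $(u,e,x_j,v)$, selected so that the edge colors read distinctly. The delicate subcase is $u = x_i$, $v = x_j$: the length-two path $(x_i,e,x_j)$ uses color $1$ twice, so one must reroute through a non-generator element $x_j^a \in \gen{x_j}\setminus\{e,x_j\}$ to obtain $(x_i,e,x_j^a,x_j)$ with colors $1,2,3$ -- precisely where the observation $|\gen{x_j}|\geq 3$ is used. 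If $x_1$ happens to be an involution, the symmetric detour from the $\gen{x_j}$-side handles every pair involving $x_1$. Finally, pairs in which an endpoint lies in a nontrivial intersection $\gen{x_j}\cap\gen{x_k}$ with $j,k\geq 2$ either reduce to a direct edge in a common cyclic subgroup or collapse into one of the cases above.
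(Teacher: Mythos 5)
Your proof is correct, and it follows the route the paper intends, but it is substantially more complete than the paper's own one-line justification. The paper proves this theorem simply by citing Proposition~\ref{prop221}; note, however, that the statement of Proposition~\ref{prop221} assumes $icn(G)\geq 3$, so it does not literally apply in the present situation $icn(G)=1$ --- what is really being borrowed from it is only the star-at-identity edge partition ($E_1$ = edges from $e$ to the designated generators, $E_2$ = remaining edges at $e$, $E_3$ = all other edges), which is exactly the coloring you construct. You supply the pieces the paper leaves implicit: the lower bound via Proposition~\ref{prop23} (non-cyclic since $m\geq 3$) together with Theorem~\ref{thm1} (no awning rules out $rc=2$), the forward implication via Proposition~\ref{prop} (an awning forces $rc\leq 2$), and, most importantly, the actual verification that the 3-coloring is rainbow. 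Your treatment of the delicate pair $(x_i,x_j)$ --- rerouting through a non-generator of $\gen{x_j}$, justified by the observation $InMax_G\subseteq ics(G)$ (Remark~\ref{contencion}) so that at most one maximal cyclic subgroup has order $2$ --- is precisely the point the paper glosses over, and your use of maximality to keep $E_1$ and $E_2$ disjoint is also needed and correct. In short: same strategy as the paper, but your write-up repairs an inaccurate citation and fills the genuine verification gaps.
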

\begin{proof}
By \ref{prop221}.
\end{proof}
\begin{theorem}
Let $Max_G=\{x_1,...,x_m\}$ be a essential cyclic set with $m\geq 4$. If $icn(G)=2$, then $rc(\Gamma_G^e)=3$.
\end{theorem}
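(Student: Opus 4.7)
The plan is to prove both $rc(\Gamma_G^e)\ge 3$ and $rc(\Gamma_G^e)\le 3$. The lower bound is immediate from Proposition~\ref{prop3}, which applies because $m\ge 4$ and $icn(G)=2\ge 2$. (Equivalently, $icn(G)=2$ prevents $G$ from having an awning by the contrapositive of Corollary~\ref{cor15}, so Theorem~\ref{thm1} rules out $rc(\Gamma_G^e)=2$, while $G$ is non-cyclic since $m\ge 4$.)

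For the upper bound I would construct an explicit rainbow $3$-coloring. After relabelling, assume $ics(G)=\{x_1,x_2\}$, so $\langle x_1\rangle\cap\langle x_2\rangle=\{e\}$ and $\langle x_i\rangle\cap\langle x_1\rangle=\langle x_i\rangle\cap\langle x_2\rangle=\{e\}$ for every $i\ge 3$, while for each $i\ge 3$ some $k\ge 3$ satisfies $\langle x_i\rangle\cap\langle x_k\rangle\ne\{e\}$ (otherwise $x_i\in ics(G)$). One also checks that $|\langle x_i\rangle|$ cannot be prime for $i\ge 3$: if it were, maximality of $\langle x_i\rangle$ would force $\langle x_i\rangle\cap\langle x_j\rangle=\{e\}$ for every $j\ne i$, again putting $x_i\in ics(G)$; hence $|\langle x_i\rangle|\ge 4$ and there are non-bridge, non-maximal intermediate vertices available inside each such $\langle x_i\rangle$. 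The coloring I would use hybridises the star-coloring of Proposition~\ref{prop221} with the awning-like bridge coloring of Proposition~\ref{prop}: give color $3$ to every edge $\{a,b\}$ with $a,b\in\langle x_i\rangle$, $a,b\ne e$, that is not chosen as a bridge; color $1$ to $\{e,x_1\}$, to $\{e,x_i\}$ for $i\ge 3$, and to the ``type-$A$'' bridge edges $\{a,h_{j,k}\}$ with $a\in\langle x_j\rangle\setminus\langle x_k\rangle$, $j,k\ge 3$; and color $2$ to $\{e,x_2\}$, to every other edge $\{e,y\}$ incident with $e$, and to the complementary ``type-$B$'' bridges.

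Rainbow-connectivity is then verified by case analysis. Two vertices inside a single $\langle x_i\rangle$ share a direct edge. The critical pair $(x_1,x_2)$ uses the length-two path $x_1\to e\to x_2$ with colors $1,2$, which is rainbow precisely because of the asymmetric split of the two edges at $e$. A pair $(u,v)$ with $u\in\langle x_j\rangle$, $v\in\langle x_k\rangle$, $j,k\ge 3$, $\langle x_j\rangle\cap\langle x_k\rangle\ne\{e\}$, uses the bridge $u\to h_{j,k}\to v$ with colors $\{1,2\}$ as in Proposition~\ref{prop}; the remaining pairs route through $e$, using either the direct $2$-path $u\to e\to v$ when its two edges carry distinct colors, or the length-three detour $u\to x_j\to e\to v$ with color sequence $\{3,1,2\}$ up to permutation, exploiting the intermediate vertex guaranteed by $|\langle x_j\rangle|\ge 4$. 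The main obstacle is arranging the bridge coloring of the sub-family $\{x_3,\ldots,x_m\}$ so that the consistency conditions (a)--(d) of Definition~\ref{awning} are met while using only the two colors $1,2$, without clashing with the assignments already fixed at $e$; a secondary concern is the degenerate subcase where $x_1$ or $x_2$ is an involution and no within-$\langle x_1\rangle$ (or $\langle x_2\rangle$) detour exists, but the asymmetric split of $\{e,x_1\},\{e,x_2\}$ is precisely what keeps the path $x_1\to e\to x_2$ rainbow in that case as well.
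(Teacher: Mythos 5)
Your lower bound is fine and is exactly the paper's: Proposition~\ref{prop3} (or, as in your parenthetical, $icn(G)=2$ excludes an awning by Corollary~\ref{cor1}, so Theorem~\ref{thm1} rules out $rc(\Gamma_G^e)=2$, and $m\geq 4$ rules out $rc(\Gamma_G^e)=1$ by~\ref{prop23}); that parenthetical is in fact the cleaner, self-contained justification. The problem is the upper bound. You explicitly defer ``the main obstacle,'' namely making the type-$A$/type-$B$ bridge assignment on $\{x_3,\dots,x_m\}$ consistent; but that consistency requirement is precisely conditions (a)--(d) of Definition~\ref{awning}, i.e.\ you are asking for an awning-like structure, which is not available in general (indeed no awning exists here, since $icn(G)=2$ contradicts Corollary~\ref{cor1}), and you give no argument that the sub-family $\{x_3,\dots,x_m\}$ admits one. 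So the heart of your rainbow $3$-coloring is unproven, not a detail. Moreover the scheme as written has a concrete failure mode: if $u=h_{j,k}$ is itself a bridge element, then the edge $\{u,x_j\}$ carries color $1$ or $2$ rather than $3$, and both of your proposed routes $u\!-\!x_j\!-\!e\!-\!v$ and $u\!-\!e\!-\!v$ can repeat a color.

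The fix is to drop the bridges altogether, which is essentially the coloring the paper invokes via Proposition~\ref{prop221}: give color $1$ to $\{e,x_1\}$ and to $\{e,x_i\}$ for $i\geq 3$, color $2$ to every other edge incident with $e$ (including $\{e,x_2\}$), and color $3$ to every edge not through $e$. This is well defined because no $x_i$ lies in $\langle x_k\rangle$ for $k\neq i$. Then every pair is rainbow-connected through $e$: $(x_1,x_2)$ and any pair $\{x_j,\,v\}$ with $v\in\langle x_k\rangle\setminus\{e,x_k\}$, $k\neq j$, use the direct $2$-path with colors $1,2$; a pair $u\in\langle x_j\rangle\setminus\{e,x_j\}$, $v\in\langle x_k\rangle\setminus\{e,x_k\}$ uses $u\!-\!x_j\!-\!e\!-\!v$ with colors $3,1,2$; and a pair $(x_j,x_k)$ with both edges at $e$ colored $1$ uses $x_j\!-\!e\!-\!b\!-\!x_k$ with colors $1,2,3$, where $b\in\langle x_k\rangle\setminus\{e,x_k\}$ exists because, by your own prime-order observation, $|\langle x_k\rangle|\geq 4$ whenever $k\geq 3$ (and one of the two indices can always be taken $\geq 3$ since $\{e,x_1\},\{e,x_2\}$ get distinct colors); this also disposes of the involution subcase for $x_1,x_2$ without any special pleading. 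With that replacement your argument closes and coincides in substance with the paper's proof, which simply cites Propositions~\ref{prop221} and~\ref{prop3}.
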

\begin{proof}
 By \ref{prop221} and \ref{prop3}.
\end{proof}

\begin{theorem}
  Let $Max_G=\{x_1,...,x_m\}$ be an essential cyclic set. If $icn(G)\geq 3$, then $rc(\Gamma_G^e)=|InMax_G|$.
\end{theorem}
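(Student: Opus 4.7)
The plan is to prove the equality by matching the bound $rc(\Gamma_G^e) \leq |InMax_G|$ from above with the bound $|InMax_G| \leq rc(\Gamma_G^e)$ from below, under the natural reading of the statement (with $|InMax_G| \geq 3$, since otherwise Proposition \ref{prop221} alone forces $rc(\Gamma_G^e) \geq 3$ and the conclusion should be read as $\max(3, |InMax_G|)$).

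For the lower bound I would invoke the preceding lemma, which gives $|InMax_G| \leq rc(\Gamma_G^e)$ whenever $InMax_G$ is nonempty. The underlying structural point is that every $y \in InMax_G$ is a pendant vertex of $\Gamma_G^e$ attached to $e$: since $\gen{y}$ is maximal of order $2$, any cyclic subgroup containing $y$ must equal $\gen{y}$, so the only neighbour of $y$ in $\Gamma_G^e$ is $e$. Hence for distinct $y_i, y_j \in InMax_G$ the sole candidate path is $(y_i, e, y_j)$, and the $|InMax_G|$ edges $\{y_i, e\}$ must carry pairwise distinct colours.

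For the upper bound I would exhibit an explicit $l$-colouring with $l := |InMax_G|$. Writing $InMax_G = \{y_1, \dots, y_l\}$ and $Max_G \setminus InMax_G = \{x_{l+1}, \dots, x_m\}$, I would assign colour $i$ to $\{y_i, e\}$ for every $i = 1, \dots, l$, then on each non-involution subgroup $\gen{x_j}$ with $j > l$ re-use the palette $\{1,2,3\} \subseteq \{1,\dots,l\}$ along the lines of Proposition \ref{prop221}: colour $1$ on $\{x_j, e\}$, colour $2$ on every $\{u, e\}$ with $u \in \gen{x_j} \setminus \{e, x_j\}$, and colour $3$ on every edge internal to $\gen{x_j}$ that avoids $e$. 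Two involutions rainbow-connect through $e$ by design; for an involution $y_i$ and a vertex $u \in \gen{x_j}$, either the direct $2$-path $(u, e, y_i)$ is already rainbow, or the detour $(u, x_j, e, y_i)$ of colour pattern $(3, 1, i)$ works for $i \notin \{1, 3\}$, while the symmetric exceptional cases exploit the hypothesis $l \geq 3$, leaving a free colour in the involution palette; pairs inside or across the non-involution subgroups are handled by the $2$- or $3$-path routes of Propositions \ref{prop28} and \ref{prop221}.

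The main obstacle will be the consistency of the local colouring on overlapping non-involution subgroups: if $h \in \gen{x_j} \cap \gen{x_{j'}}$ for $j, j' > l$ and $h \neq e$, the edge $\{h, x_{j'}\}$ has to carry a colour compatible with both prescriptions. I would deal with this by restricting $\Gamma_G^e$ to $Max_G \setminus InMax_G$ and applying the awning machinery of Definition \ref{awning}: if that restriction admits an awning, Proposition \ref{prop} produces a $2$-colouring that fits inside $\{1, 2\}$; otherwise the non-awning branch of Proposition \ref{prop221} uses only the three auxiliary colours $\{1, 2, 3\}$. Either way the auxiliary palette is absorbed into the $l$ colours already committed to the involution edges, so the construction uses exactly $|InMax_G|$ colours, as required.
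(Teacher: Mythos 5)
Your proposal is correct and follows essentially the same route as the paper: the lower bound is the pendant-involution argument of the lemma preceding Proposition \ref{prop221} (each $y\in InMax_G$ is attached only to $e$, so the edges $\{y,e\}$ need pairwise distinct colours) together with the bound $rc(\Gamma_G^e)\geq 3$ from Proposition \ref{prop221}, and your upper-bound colouring (one private colour per involution edge, the $\{1,2,3\}$ scheme of colour $1$ on $\{x_j,e\}$, colour $2$ on the other edges at $e$, colour $3$ on edges avoiding $e$) is exactly the second construction in the proof of Proposition \ref{prop221}, which the paper's one-line proof cites together with Proposition \ref{prop27}. Your caveat that the statement should be read with $|InMax_G|\geq 3$ (or as $\max(3,|InMax_G|)$) is a fair observation about the theorem itself, and your worry about inconsistent colours on overlapping non-involution subgroups is actually vacuous, since an element of $\gen{x_j}\cap\gen{x_{j'}}$ other than $e$ can never be one of the generators $x_j,x_{j'}$ by maximality, so the uniform prescription is well defined without any awning detour.
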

\begin{proof}
  By \ref{prop221} and \ref{prop27}.
\end{proof}

\begin{theorem} Let $Max_G=\{x_1,...,x_m\}$ be an essential cyclic set with $icn(G)=0$, then
$rc(\Gamma_G^e)=
\begin{cases}
  1, & \mbox{if only if $G$ is a cyclic group }  \\
  2, & \mbox{if only if $G$ has an awning and $G$ is not cyclic.} \\
  3, & \mbox{iff $G$ has not an awning.}.
\end{cases}$
\end{theorem}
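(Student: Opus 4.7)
The plan is to derive this theorem as a synthesis of Proposition \ref{prop23}, Theorem \ref{thm1}, and the 3-coloring idea behind Proposition \ref{prop221}, organized into three mutually exclusive cases under the standing hypothesis $icn(G)=0$.

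First I would observe that the hypothesis $icn(G)=0$ already rules out $G$ being cyclic: if $G$ were cyclic then $|Max_G|=1$, so the sole $x_1$ vacuously satisfies the defining condition of $ics(G)$, forcing $icn(G)=1$, a contradiction. Hence the first case is vacuous within the hypothesis (both sides of the biconditional fail), and Proposition \ref{prop23} already gives $rc(\Gamma_G^e)\geq 2$. For the second equivalence I would invoke Theorem \ref{thm1} verbatim: $rc(\Gamma_G^e)=2$ iff $G$ has an awning and is non-cyclic. Combined with the previous step, this also handles half of the third case: if $G$ has no awning, then $rc(\Gamma_G^e)\neq 2$ (and $\neq 1$), so $rc(\Gamma_G^e)\geq 3$; and if $rc(\Gamma_G^e)=3$, then Theorem \ref{thm1} forbids an awning.

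The step that is not immediate from earlier results is the upper bound $rc(\Gamma_G^e)\leq 3$ in the no-awning case. I would adapt the 3-coloring scheme underlying Proposition \ref{prop221}: assign color $3$ to every intra-$\gen{x_i}$ edge, and for each pair $(i,j)$ with a common element $h_{i,j}\neq e$ split the incident edges at $h_{i,j}$ into the two ``sides'' (colors $1$ and $2$) as in Definition \ref{awning}. The third color is what absorbs the consistency conflicts that prevented a valid $A_i,B_i$-partition, so that any two vertices lying in different maximal cyclic subgroups admit a length-$2$ rainbow path either through some $h_{i,j}$ or, when no pairwise $h_{i,j}$ is available, through $e$ using the third color for rerouting.

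The main obstacle is precisely this last verification: one must check pair-by-pair that no conflict survives, without leaning on the $icn(G)\geq 3$ hypothesis that made the construction of Proposition \ref{prop221} clean. Under $icn(G)=0$ every $x_i$ has a nontrivial common element with some other $x_j$, which should make the rerouting argument go through, but spelling this out is where the real work sits. Once that upper bound is in hand, combining the three equivalences (vacuous cyclic case, Theorem \ref{thm1}, and the just-established bound) yields the full trichotomy.
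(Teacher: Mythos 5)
Your case structure matches the paper's: case 1 is handled by Proposition \ref{prop23}, and case 2 (together with the lower bound in case 3) follows from Theorem \ref{thm1}, exactly as the paper intends. The genuine gap is the upper bound $rc(\Gamma_G^e)\leq 3$ in the no-awning case, which you yourself flag as ``where the real work sits'' and then leave unverified. Moreover, the coloring you sketch is not even well defined: an edge $\{a,h_{i,j}\}$ with $a\in\gen{x_i}$ and $h_{i,j}\in\gen{x_i}\cap\gen{x_j}$ is itself an intra-$\gen{x_i}$ edge, so your rule ``color every intra-$\gen{x_i}$ edge $3$'' collides with ``split the edges at $h_{i,j}$ into colors $1$ and $2$''; and splitting ``as in Definition \ref{awning}'' is precisely what the no-awning hypothesis says cannot be done consistently, while the claim that the third color ``absorbs the consistency conflicts'' is never substantiated.

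The paper closes this bound differently, via the explicit star coloring through the identity used in the proof of Proposition \ref{prop221}: give color $1$ to the edges $\{e,x_i\}$, color $2$ to the edges $\{e,y\}$ for the remaining vertices $y$, and color $3$ to all edges inside each $\gen{x_i}$. The hypothesis $icn(G)=0$ enters exactly here: by Remark \ref{contencion} it forces $InMax_G=\emptyset$, so every $\gen{x_i}$ has at least three elements, and any two vertices lying in different maximal cyclic subgroups are joined by a rainbow path of length at most $3$ through $e$ (rerouting through an intermediate element of $\gen{x_i}\setminus\{e,x_i\}$ when both endpoints would otherwise meet $e$ with the same color). Replacing your conflict-absorption scheme by this coloring, and keeping your reduction of cases 1 and 2 to Proposition \ref{prop23} and Theorem \ref{thm1}, yields a complete argument; as written, the decisive step is missing.
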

\begin{proof}
  \textbf{Case 1} By \ref{prop23}.\\
  \textbf{Case 2} By \ref{prop}.\\
  \textbf{Case 3} By \ref{remawn}, \ref{prop221}.
\end{proof}



\end{document}